\documentclass[10pt]{amsart}
\usepackage{amsfonts}
\usepackage{mathrsfs}
\usepackage{amsmath}
\usepackage{graphicx}
\usepackage{caption}
\usepackage{leftidx}
\usepackage{amssymb}
\usepackage{amscd}
\usepackage{epsfig}
\usepackage{color}
\usepackage{xcolor}
\usepackage[percent]{overpic}
\usepackage[all]{xy}
\usepackage{color}
\usepackage{yfonts}
\usepackage{hyperref}
\usepackage[mathscr]{euscript}
\usepackage{tikz}

 \topmargin=0pt
\oddsidemargin=0pt
 \evensidemargin=0pt
 \textwidth=16cm
 \textheight=22cm
 \raggedbottom
 \parskip=0.12cm

\newtheorem{thm}{Theorem}[section]
\newtheorem{prop}[thm]{Proposition}
\newtheorem{lemma}[thm]{Lemma}

\theoremstyle{definition}

\newtheorem{definition}[thm]{Definition}
\newtheorem{remark}[thm]{Remark}

\newcommand{\cB}{\mathcal{B}}
\newcommand{\cD}{\mathcal{D}}

\newcommand{\cF}{\mathcal{F}}

\newcommand{\cL}{\mathcal{L}}

\newcommand{\cN}{\mathcal{N}}
\newcommand{\cO}{\mathcal{O}}
\newcommand{\cS}{\mathcal{S}}

\newcommand{\cW}{\mathcal{W}}

\newcommand{\bC}{\mathbb{C}}

\newcommand{\bR}{\mathbb{R}}

\newcommand{\fg}{\mathfrak{g}}
\newcommand{\fh}{\mathfrak{h}}
\newcommand{\fn}{\mathfrak{n}}
\newcommand{\fb}{\mathfrak{b}}

\newcommand{\fS}{\mathfrak{S}}
\newcommand{\fU}{\mathfrak{U}}

\newcommand{\Ad}{\mathrm{Ad}}
\newcommand{\opp}{\mathrm{opp}}
\newcommand{\Sh}{\mathrm{Sh}}

\title[Big tilting sheaves as holomorphic Morse Branes]{Representing the Big tilting sheaves as holomorphic Morse Branes}
\author[Xin Jin]{Xin Jin}
\email{xin.jin@bc.edu}

\address{Department of Mathematics, Boston College}

\subjclass[2000]{}

\date{}

\keywords{Tilting sheaves, Morse branes, holomorphic branes, Fukaya categories}

\numberwithin{equation}{section}

\begin{document}
\begin{abstract}
We introduce Morse branes in the Fukaya category of a holomorphic symplectic manifold, with the goal of constructing tilting objects in the category. We give a construction of a class of Morse branes in the cotangent bundles, and apply it to give the holomorphic branes that represent the big tilting sheaves on flag varieties. 
\end{abstract}

\maketitle
\tableofcontents

\section{Introduction}

For a complex semisimple Lie group $G$ and a Borel subgroup $B\subset G$ with its unipotent radical $N$, the category of $N$-equivariant perverse sheaves on $\cB=G/B$ corresponds to the principal block of the BGG Category $\cO$. The indecomposable tilting perverse sheaves form a natural basis for the category, and they are in bijection with the Schubert cells. One can also view the tilting sheaves from other perspectives, i.e. as $\cD$-modules via the Riemann-Hilbert correspondence or as Lagrangian branes in the Fukaya category $F(T^*\cB)$ via the Nadler-Zaslow correspondence. There have been several constructions of tilting objects as sheaves or $\cD$-modules, including certain averaging or limiting process (c.f. \cite{FrGa}, \cite{Nad06}, \cite{BeYu}, \cite{Cam}). In this paper, we construct the tilting object corresponding to the open Schubert cell, often referred as the \emph{big tilting}, as a holomorphic Lagrangian brane in the Fukaya category $F(T^*\cB)$. 

The construction is simple. Consider the moment map for the Hamiltonian $N$-action on $T^*\cB$, $\mu_N: T^*\cB\rightarrow \fn^*$, where $\fn$ is the Lie algebra of $N$. Take a non-degenerate character $\bar{e}$ of $\fn$ in $\fn^*$, then $L_{\bar{e}}=\mu_N^{-1}(\bar{e})$ is a closed (smooth) holomorphic Lagrangian in $T^*\cB$. It is just an $N$-orbit and we can equip it with a canonical brane structure to make it correspond to a perverse sheaf (c.f. \cite{Jin}).
\begin{thm}\label{big tilting thm}
The brane $L_{\bar{e}}$ corresponds to the big tilting sheaf on $\cB$, via the Nadler-Zaslow correspondence.
\end{thm}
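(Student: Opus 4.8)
The plan is to identify $L_{\bar e}$ explicitly, realize it as an instance of the Morse-brane construction for the Whittaker function, and then read off the standard and costandard filtrations of the corresponding complex and match them to those characterizing the big tilting.

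\emph{Step 1 (Geometry of $L_{\bar e}$).} I would first show that $L_{\bar e}$ is a single free $N$-orbit whose image in $\cB$ is exactly the open cell $C_{w_0}=Bw_0B/B$. For $x\in\cB$ the image of $\mu_N|_{T^*_x\cB}$ is the annihilator in $\fn^*$ of the stabilizer $\fn_x\subset\fn$; since $\bar e$ is non-degenerate it is nonzero on every $\fn_x$ with $x\notin C_{w_0}$ — the stabilizer of a non-open $N$-orbit contains a simple root space $\fn_\alpha$ (if $w\neq w_0$ there is a simple $\alpha$ with $w^{-1}\alpha>0$), and $\bar e$, being a Lie algebra character, has $\bar e(\fn_\alpha)\neq 0$ — while over $C_{w_0}$ the action is free and $\mu_N|_{T^*_x\cB}$ is a linear isomorphism onto $\fn^*$. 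Hence, identifying $C_{w_0}\cong N$, the Lagrangian $L_{\bar e}$ is the graph over $N$ of the left-invariant holomorphic $1$-form extending $\bar e\in\fn^*=T^*_eN$; this form is closed (as $\bar e$ kills $[\fn,\fn]$) and exact, with primitive the Whittaker function $f$ associated to $\bar e$. In particular $L_{\bar e}\cong N$ is contractible, so its brane structure is canonical and $L_{\bar e}$ is indecomposable.

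\emph{Step 2 (Morse-brane description).} Applying the Morse-brane construction in cotangent bundles developed in the paper to the pair $(C_{w_0}\subset\cB,\ f)$, the brane $L_{\bar e}$ represents a constructible complex $\cF$ on $\cB$ whose singular support is the union of conormals $\bigcup_{w}\overline{T^*_{C_w}\cB}$ — determined by the Legendrian limit of $L_{\bar e}$ at infinity over $\partial C_{w_0}$, where the covectors blow up — so that $\cF$ is $N$-constructible with full support $\cB$; moreover $\cF|_{C_{w_0}}=\bC[\dim\cB]$ since $L_{\bar e}$ meets each cotangent fiber over $C_{w_0}$ transversally in a single point in the appropriate degree. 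The construction also supplies standard and costandard filtrations of $\cF$ whose graded pieces indexed by $w$ are computed by $HF^\bullet(L_{\bar e},L_w)$, resp. $HF^\bullet(L_w,L_{\bar e})$, with the standard, resp. costandard, brane $L_w$ supported on $\overline{T^*_{C_w}\cB}$; by $N$-equivariance these reduce to the Morse cohomology of $f$ restricted to $C_w$, resp. to a normal slice.

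\emph{Step 3 (Identification with the big tilting).} Non-degeneracy of $\bar e$ forces the relevant restrictions of $f$ to be Morse with a single critical point, so each of the complexes $R\mathrm{Hom}(\Delta_w,\cF)$ and $R\mathrm{Hom}(\cF,\nabla_w)$ is concentrated in degree $0$ and one-dimensional, with $\Delta_{w_0}$, resp. $\nabla_{w_0}$, occurring with multiplicity one. Thus $\cF$ is a perverse sheaf admitting a standard filtration and a costandard filtration, it is indecomposable (Step 1), and it restricts to $\bC[\dim\cB]$ on the open cell; this is precisely the characterization of the indecomposable tilting perverse sheaf attached to $C_{w_0}$, so $\cF\simeq\cT_{w_0}$ is the big tilting. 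Equivalently, one may note that the operation carrying the zero section — which represents $\bC_\cB[\dim\cB]$ — to $L_{\bar e}$ is the Floer-theoretic shadow of Whittaker averaging $\mathrm{Av}^{\psi_{\bar e}}_!$ twisted by $\bar e$, and invoke the constructions of tilting sheaves via averaging and nearby cycles in \cite{FrGa,Nad06,BeYu,Cam}.

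\emph{Main obstacle.} The crux is that $L_{\bar e}$ is not conical at infinity: its projection is the \emph{open} cell rather than a closed union of strata, so the Nadler--Zaslow formalism does not apply off the shelf and one must control $L_{\bar e}$ near $\partial C_{w_0}$ — showing that after a suitable rescaling or wrapping its Legendrian boundary is $\bigcup_w S^*_{C_w}\cB$, hence $\cF$ is genuinely $N$-constructible with full support, and that the Floer groups of Step 2 are computed by honest Morse theory of the (suitably properly behaved) Whittaker function. This boundary analysis is exactly where the holomorphic Morse-brane technology of the paper is needed, and is where I expect the real work to lie.
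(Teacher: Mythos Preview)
Your Step 1 is correct and agrees with the paper: $L_{\bar e}=\mu_N^{-1}(\bar e)$ is the graph $\Gamma_{dF_{w_0}}$ of a generic linear function on the open cell $S_{w_0}$. But you have inverted where the difficulty lies. The boundary behavior you flag as the ``main obstacle'' is in fact immediate from this moment-map description: $L_{\bar e}$ is a fiber of $\mu_N$, hence closed in $T^*\cB$, and $\lim_{t\to 0}a_Z(t)\cdot L_{\bar e}\subset \mu_N^{-1}(0)=\Lambda_\cS$, so $L_{\bar e}^\infty\subset\Lambda_\cS^\infty$ with no further analysis.

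The real gap is in Steps 2--3. You assert that $HF^\bullet(L_{\bar e},L_w)$ for (co)standard branes on the Schubert stratification $\cS$ reduces to ``Morse cohomology of $f$ restricted to $C_w$'' and is one-dimensional in a single degree. But $f=F_{w_0}$ is defined only on the open cell $S_{w_0}$, which is disjoint from $S_w$ for $w\neq w_0$; there is nothing to restrict. Geometrically $L_{\bar e}$ projects to $S_{w_0}$ while the (co)standard brane on $S_w$ lives over $\overline{S_w}$, so they do not meet at all, and the Floer group is governed entirely by wrapping at infinity --- precisely what your argument does not control. Nor does the paper's Morse-brane construction ``supply standard and costandard filtrations'' as you claim. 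The paper does \emph{not} attempt this computation. It works instead with the \emph{opposite} stratification $\cS^-$: by a lemma from \cite{Nad06}, the stalk $i_{p_w}^*s_w^*\cF$ and costalk $i_{p_w}^*s_w^!\cF$ are dual to $\mathrm{Hom}(\cF, s^-_{w!}\bC_{S_w^-}[\dim S_w^-])$ and $\mathrm{Hom}(\cF, s^-_{w*}\bC_{S_w^-}[\dim S_w^-])$. The key geometric input is that $L_{\bar e}$ meets $\Lambda_{\cS^-}$ transversely at the single smooth point $(p_{w_0},(dF_{w_0})_{p_{w_0}})$ --- checked via the Springer map, since $(dF_{w_0})_{p_{w_0}}$ is regular nilpotent while $\mu_G(T^*_{S_w^-}\cB)$ misses $\cN^{\mathrm{reg}}$ for $w\neq w_0$. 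One then refines $\cS^-$ to a triangulation and shows that pairing with $L_{\bar e}$ kills every (co)standard generator except $T^*_{p_{w_0}}\cB$, so $\cF$ represents the microlocal-stalk functor on $Sh_{\cS^-}(\cB)$ at that covector; the tilting degree bounds and indecomposability then come from perverse $t$-exactness of microlocal stalks and the multiplicity formula in \cite{Nad06}. The passage through $\cS^-$ is essential and is absent from your outline.
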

The construction fits into a more general setting as \emph{Morse branes} in holomorphic symplectic manifolds that we will introduce below, and the consideration of Morse branes is largely motivated from the approach by Nadler \cite{Nad06} to construct tilting sheaves. We remark that a notable application of the holomorphic brane approach to tilting sheaves is that the branes come in a $\mathbb{C}^*$-family, and one can use it to give a geometric construction of the mixed Hodge structures on the tilting sheaves (in the sense of \cite{Saito}), and the construction is in the forthcoming work \cite{Jin4}. 

\subsection{Morse branes in holomorphic symplectic manifolds}
We will work in the setting that an exact holomorphic symplectic manifold $(M,\omega_\mathbb{C})$ is endowed with two commuting $\mathbb{C}^*$-actions: one is Hamiltonian and is denoted as $\mathbb{C}_X^*$, and the other, denoted as $\mathbb{C}_Z^*, $ scales $\omega_\mathbb{C}$ by a positive weight and it contracts $M$ to a compact core as $t\rightarrow 0$. We also assume that the $\mathbb{C}_X^*$-action has finitely many fixed points, and we will denote the union of their ascending (resp. descending) manifolds as $\Lambda_X$ (resp. $\Lambda_X^{opp}$). Both $\Lambda_X$ and $\Lambda_X^{opp}$ are holomorphic conical Lagrangians with respect to the $\mathbb{C}^*_Z$-action, by the commutativity condition of the two actions. We assume that $\Lambda_X$ and $\Lambda_X^{opp}$ are disjoint away from the compact core of $M$.

Consider the Fukaya category $F_{\Lambda_X}(M)$, whose objects are (closed) Lagrangian branes in $M$ that are dilated towards $\Lambda_X$ by $\mathbb{C}_Z^*$ as $t\rightarrow 0$. We call a brane $L\in F_{\Lambda_X}(M)$ a \emph{Morse brane} if it intersects $\Lambda_X^{opp}$ uniquely and transversely at a point in the smooth portion of $\Lambda_X^{opp}$. The name comes from the principle that it plays the role of calculating the ``microlocal stalk'' in $F_{\Lambda_X^{opp}}(M)$ at the intersection point (c.f. \cite{Nad14} and \cite{Jin}). 

We give a natural construction of a class of Morse branes in the situation when $M$ is the cotangent bundle of a complex projective variety with a contracting $\mathbb{C}_Z^*$-action on the fibers (of weight 1). The specialty of cotangent bundles is that 
if $k_0$ is the minimum of the positive weights of the $\mathbb{C}_X^*$-action on the tangent spaces of the fixed points, then we can use the flow of $\mathbb{C}_{X-k_0Z}^*$ to construct holomorphic Morse branes.  Here $\mathbb{C}_{X-k_0Z}^*$ is the subgroup in $\mathbb{C}_X^*\times \mathbb{C}_Z^*$ which is the graph of the group homomorphism $\mathbb{C}_X^*\rightarrow \mathbb{C}_Z^*, t\mapsto t^{-k_0}$.  We expect the construction to be generalized to some other holomorphic symplectic manifolds (e.g. hypertoric varieties, the resolution of the Slodowy slices) with more careful investigation of the weights of the two $\mathbb{C}^*$-actions, and we leave this for a future work. 

The construction goes as follows. Take a point $x$ in the fixed loci of $\mathbb{C}^*_{X-k_0Z}$, and take the ascending manifold of $x$ with respect to the $\mathbb{C}^*_{X-k_0 Z}$-action. By the weights condition, this is a (not necessarily closed) holomorphic Lagrangian submanifold and we denote it by $L_{x}$. The main theorem we get is the following.

\begin{thm}\label{L_x}
If $x\in(\Lambda_X^{opp})^{sm}$, then $L_x$ is a holomorphic Morse brane in $F_{\Lambda_X}(M)$.
\end{thm}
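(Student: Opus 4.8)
The strategy is to verify, in order: (i) $L_x$ is a holomorphic Lagrangian submanifold, hence carries the canonical brane structure of \cite{Jin}; (ii) $L_x$ is dilated to $\Lambda_X$ by $\mathbb{C}^*_Z$ as $t\to 0$, so that $L_x$ is an object of $F_{\Lambda_X}(M)$; (iii) $L_x$ meets $\Lambda_X^\opp$ transversely in the single point $x\in(\Lambda_X^\opp)^{sm}$. The common input is local weight bookkeeping. Write $M=T^*Y$ with $\pi\colon M\to Y$ the projection and $\mathbb{C}^*_X$ the lift of a $\mathbb{C}^*$-action on $Y$. Since $\mathbb{C}^*_X$ commutes with $\mathbb{C}^*_Z$ and has finitely many fixed points, each lies in the zero section $Y\subset M$ and is also $\mathbb{C}^*_Z$-fixed; at such a point $p$, normalizing the $\mathbb{C}^*_Z$-weight on $\omega_\mathbb{C}$ to $1$, we have $T_pM=T_pY\oplus T^*_pY$ with $\mathbb{C}^*_X$-weights $\{a_i\}\cup\{-a_i\}$ (all $a_i\neq 0$) and $\mathbb{C}^*_Z$-weights $\{0\}\cup\{1\}$, so $\mathbb{C}^*_{X-k_0Z}$ acts with weights $\{a_i\}\cup\{-a_i-k_0\}$ and scales $\omega_\mathbb{C}$ by $-k_0\neq 0$. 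The hypothesis $k_0=\min_{i,p}|a_i|$ enters through a single consequence, used repeatedly: no $\mathbb{C}^*_{X-k_0Z}$-weight on any $T_pM$ lies in the open interval $(-k_0,0)$ — the $a_i$ avoid it since $|a_i|\ge k_0$, and $-a_i-k_0\in(-k_0,0)$ would force $a_i\in(-k_0,0)$. Finally, as $x$ is $\mathbb{C}^*_{X-k_0Z}$-fixed one has $\mathbb{C}^*_X(t)\cdot x=\mathbb{C}^*_Z(t^{k_0})\cdot x$, so the $\mathbb{C}^*_X$-orbit of $x$ limits onto $\pi(x)$; since $x\in\Lambda_X^\opp$ this limit is a fixed point, so $p:=\pi(x)$ is one of them, and near the smooth point $x$ the conical Lagrangian $\Lambda_X^\opp$ is the conormal bundle $T^*_SY$ of the (smooth) $\mathbb{C}^*_X$-cell $S\ni p$, with $x$ lying in the conormal fibre $N^*_{S,p}$.

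For (i): $T_xL_x$ is the sum of the negative $\mathbb{C}^*_{X-k_0Z}$-weight spaces of $T_xM$. Because $\omega_\mathbb{C}$ pairs the weight-$\alpha$ and weight-$(-k_0-\alpha)$ spaces and no weight lies in $(-k_0,0)$, every negative weight is $\le -k_0$, so any two sum to $\le -2k_0<-k_0$ and cannot pair: $T_xL_x$ is isotropic. It is maximal, since $\bigoplus_{\alpha<0}(T_xM)_\alpha=\bigoplus_{\alpha\le -k_0}(T_xM)_\alpha$ pairs perfectly under $\omega_\mathbb{C}$ with $\bigoplus_{\alpha\ge 0}(T_xM)_\alpha$ while the remaining block $\bigoplus_{-k_0<\alpha<0}$ is empty, forcing $\dim L_x=\tfrac12\dim M$. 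Hence $L_x$ is Lagrangian; it is holomorphic as an ascending manifold of a holomorphic $\mathbb{C}^*$-flow, and a holomorphic Lagrangian carries the canonical grading and relative $\mathrm{Pin}$ structure of \cite{Jin}.

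For (ii): $\mathbb{C}^*_Z$ commutes with $\mathbb{C}^*_{X-k_0Z}$, so it takes ascending manifolds to ascending manifolds, $\mathbb{C}^*_Z(t)\cdot L_x=L_{\mathbb{C}^*_Z(t)\cdot x}$; as $\mathbb{C}^*_Z(t)\cdot x\to p$ and the $L_y$ (for $y$ in the $\mathbb{C}^*_{X-k_0Z}$-fixed component through $x$) are the fibres of the Bialynicki--Birula bundle of that component, $\lim_{t\to 0}\mathbb{C}^*_Z(t)\cdot L_x=L_p$. It remains to place $L_p$ in $\Lambda_X$: $p$ is $\mathbb{C}^*_Z$-fixed, so $L_p$ is $\mathbb{C}^*_Z$-invariant, hence $\mathbb{C}^*_X$-invariant, and for $y\in L_p$ the fibre component of $\mathbb{C}^*_{X-k_0Z}(t)\cdot y-p$ decays, along the negative $\mathbb{C}^*_{X-k_0Z}$-weights occurring in $T_pL_p$, at rate $t^{-a_i-k_0}$ with $-a_i-k_0\le -2k_0$, which outpaces the $t^{k_0}$ fibrewise expansion of $\mathbb{C}^*_Z(t)^{k_0}$; so $\mathbb{C}^*_X(t)\cdot y=\mathbb{C}^*_Z(t^{k_0})\cdot(\mathbb{C}^*_{X-k_0Z}(t)\cdot y)\to p$, whence $L_p$ lies in the $\mathbb{C}^*_X$-ascending manifold of $p$, hence in $\Lambda_X$. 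This is the step I expect to be the main obstacle: passing from ``the central fibre $L_p$ lies in $\Lambda_X$'' to ``$L_x$ is dilated to $\Lambda_X$'' requires controlling the noncompact ends of $L_x$, making the degeneration $\mathbb{C}^*_Z(t)\cdot L_x\to L_p$ precise as Lagrangian cycles, and checking that $L_x$ is a properly embedded Lagrangian that is conical at infinity in the sense required of objects of $F_{\Lambda_X}(M)$.

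For (iii): $x\in L_x\cap\Lambda_X^\opp$ already. For transversality there: $\pi$ is $\mathbb{C}^*_{X-k_0Z}$-equivariant, so $T_x(T^*_SY)$ is a sum of $\mathbb{C}^*_{X-k_0Z}$-weight spaces; it surjects onto $T_pS$, whose weights $\{a_i>0\}$ are positive, with kernel $N^*_{S,p}$, whose weights $\{-a_i-k_0:a_i<0\}$ are $\ge 0$ (once more by $|a_i|\ge k_0$); hence $T_x\Lambda_X^\opp\subseteq\bigoplus_{\alpha\ge 0}(T_xM)_\alpha$, complementary to $T_xL_x=\bigoplus_{\alpha<0}(T_xM)_\alpha$. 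Finally $L_x$ and $\Lambda_X^\opp$ are both $\mathbb{C}^*_{X-k_0Z}$-invariant (the latter being $\mathbb{C}^*_X$- and $\mathbb{C}^*_Z$-invariant), so $L_x\cap\Lambda_X^\opp$ is $\mathbb{C}^*_{X-k_0Z}$-invariant; as $t\to\infty$ this flow contracts all of $L_x$ to $x$, so any $y$ in the intersection has $\mathbb{C}^*_{X-k_0Z}(t)\cdot y\to x$, and if $y\neq x$ its one-dimensional orbit lies in $L_x\cap\Lambda_X^\opp$ with $x$ in its closure, contradicting that $x$ is isolated in the intersection by the transversality just proved. Therefore $L_x\cap\Lambda_X^\opp=\{x\}$ transversely, with $x\in(\Lambda_X^\opp)^{sm}$ — i.e. $L_x$ is a holomorphic Morse brane.
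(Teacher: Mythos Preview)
Your parts (i) and (iii) are essentially correct and align with the paper's local weight arguments (your (i) is the content of Lemma~\ref{ascending Lag}; your transversality and uniqueness argument in (iii) is a clean complement to what the paper leaves implicit). The genuine gap is exactly where you flag it, in (ii), and it is the heart of the matter.

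Your strategy in (ii) is to use the Bialynicki--Birula family $\mathbb{C}^*_Z(t)\cdot L_x=L_{\mathbb{C}^*_Z(t)\cdot x}$ and pass to the central fibre $L_p\subset\Lambda_X$. But knowing $L_p\subset\Lambda_X$ does not control $\overline{L_x}\cap M^\infty$: the map $t\mapsto\overline{L_{x_t}}$ need not be continuous at $t=0$, and in fact the paper singles this out in the Remark following Lemma~\ref{ascending Lag}, observing that one \emph{cannot} conclude $C(L):=\lim_{t\to 0}a_Z(t)\cdot L\subset\Lambda_X$ from invariance alone. Equally important, you never establish that $L_x$ is \emph{closed} in $M$; an ascending manifold of a Morse--Bott flow is in general only locally closed, and if $\overline{L_x}\setminus L_x$ meets $M$ then $L_x$ is not even a properly embedded Lagrangian, let alone a brane.

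The paper's proof (Theorem~\ref{tilting}) attacks both issues at once by passing to the projective compactification $\overline{M}$ and analyzing $\overline{L_x}\setminus L_x$ as a union of \emph{broken} $\mathbb{C}^*_{X-k_0Z}$-trajectories (via the finite-volume-flow framework of \cite{Harvey}). The hypothesis $x\in(\Lambda_X^{\opp})^{sm}$ is reinterpreted as $x\notin\overline{\mathfrak{U}}_X(x_\beta)=\overline{\mathfrak{U}}_{X-k_0Z}(E_\beta)$ for any $x_\beta\prec x_\alpha$, which forbids any broken trajectory from passing through a lower critical component $E_\beta\subset M$ before reaching $x$; hence the last leg of every broken trajectory ending at $x$ must originate at a critical point $[v]\in M^\infty$. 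A separate weight argument forces $[v]\in\Lambda_X^\infty$, and since $\Lambda_X^\infty\cap(\Lambda_X^{\opp})^\infty=\emptyset$ one inducts backward along the broken trajectory to conclude that all earlier legs stay in $\Lambda_X^\infty\subset M^\infty$. This simultaneously gives that $L_x$ is closed in $M$ and that $L_x^\infty\subset\Lambda_X^\infty$. Your family argument does not see this mechanism, and in particular never uses the smoothness hypothesis on $x$ for step~(ii) --- that should have been a warning sign, since without it $L_x$ can genuinely fail to be closed.
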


\subsection{Application to the construction of tilting objects}
In the case of a cotangent bundle, we have a $\mathbb{C}_X^*$-action on the base $K$ which induces the Hamiltonian $\mathbb{C}_X^*$-action on $T^*K$, and the Lagrangian $\Lambda_X$  (resp. $\Lambda_{X}^{opp}$) is the conormal variety to the stratification $\cS$ (resp. $\cS^{-}$) defined by the ascending (resp. descending) manifolds of the fixed points in $K$. 

In good situations, $\cS=\{S_\alpha\}$ and $\cS^{opp}=\{S_\alpha^{opp}\}$ are transverse to each other, and $\cS^{opp}$ is simple (see Definition \ref{simple}). Then Theorem \ref{big tilting thm} is a special case of a more general result. 

 \begin{thm}\label{L_x tilting}
 If $x\in(\Lambda_X^{opp})^{sm}$, then $L_x$ corresponds to a tilting sheaf on $K$ under the Nadler-Zaslow correspondence. 
 \end{thm}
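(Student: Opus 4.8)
The plan is to combine the geometric input of Theorem~\ref{L_x} with the microlocal characterization of tilting sheaves. By Theorem~\ref{L_x}, for $x\in(\Lambda_X^{opp})^{sm}$ the Lagrangian $L_x$ is a holomorphic Morse brane in $F_{\Lambda_X}(M)$ with $M=T^*K$. Under the Nadler--Zaslow correspondence, an object of $F_{\Lambda_X}(T^*K)$ goes to a constructible complex $P_x$ on $K$ whose singular support lies in $\Lambda_X$, i.e.\ $P_x$ is constructible with respect to the stratification $\cS=\{S_\alpha\}$. The first step is therefore to identify what ``Morse brane'' translates to on the sheaf side: since $L_x$ meets $\Lambda_X^{opp}$ once, transversely, at the smooth point $x$, the microlocal stalk of $P_x$ along $\Lambda_X^{opp}$ is one-dimensional at the covector $x$ and vanishes at every other smooth point of every conormal $N^*_{S_\alpha^{opp}}K$. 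Concretely, writing $x$ as a covector conormal to a stratum $S_{\alpha_0}^{opp}$ of the opposite (simple) stratification $\cS^{opp}$, this says $P_x$ is corepresented microlocally by the costandard (or standard, depending on conventions/shifts) object attached to $S_{\alpha_0}^{opp}$ -- its microlocal Euler characteristic vanishes against all the simple objects of $\mathrm{Perv}_{\cS^{opp}}(K)$ except the one supported on $\overline{S_{\alpha_0}^{opp}}$, where it is $1$.

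**Second**, I would invoke the characterization of tilting objects via the two stratifications. Because $\cS$ and $\cS^{opp}$ are transverse and $\cS^{opp}$ is simple (Definition~\ref{simple}), the pairing between $\mathrm{Perv}_\cS(K)$ and $\mathrm{Perv}_{\cS^{opp}}(K)$ given by microlocalization (equivalently, by the vanishing-cycles/nearby-cycles functors along the opposite strata, as in Nadler's \cite{Nad06} and \cite{BeYu}) is perfect, and an object of $\mathrm{Perv}_\cS(K)$ is tilting precisely when its microlocal stalks along $\Lambda_X^{opp}$ are concentrated in a single degree -- i.e.\ when it is ``clean'' with respect to the opposite stratification. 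The point is that a tilting object has both a standard filtration and a costandard filtration; dually, its microlocalization along each opposite conormal is a single vector space in degree zero (no higher cohomology), which is exactly the condition that the corresponding brane meets $\Lambda_X^{opp}$ cleanly/transversely. So the content of the theorem is: ``$L_x$ Morse brane $\Rightarrow$ microlocal stalks of $P_x$ along $\Lambda_X^{opp}$ are one-dimensional at $x$ and zero elsewhere $\Rightarrow$ $P_x$ has both a standard and a costandard filtration $\Rightarrow$ $P_x$ is tilting,'' and moreover $P_x$ is the \emph{indecomposable} tilting attached to the stratum $\overline{S_{\alpha_0}^{opp}}$ (with $S_{\alpha_0}^{opp}$ the stratum whose smooth locus contains the base point of $x$), because its microlocal support is the single stratum closure.

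**Third**, I would supply the translation between the transversality statement for $L_x$ and the degree-concentration statement for microlocal stalks. This uses the local model: near a transverse intersection point of a brane with a smooth conormal $N^*_S K$, the Fukaya-categorical computation of $\mathrm{Hom}$ (equivalently the microlocal stalk functor $\mu_x$ of Kashiwara--Schapira / Nadler) yields a one-dimensional space placed in the degree dictated by the Maslov/grading data of the brane, and transversality guarantees there is no spectral sequence contribution from higher intersection points -- the stalk is literally a single line. The brane structure on $L_x$ built in the proof of Theorem~\ref{L_x} (the canonical one coming from the $\mathbb{C}^*_{X-k_0Z}$-flow, cf.\ \cite{Jin}) is what pins down the degree to be zero, so that $P_x$ is perverse and the microlocal stalk is in perverse degree zero along $\Lambda_X^{opp}$ -- the defining property of tilting relative to $\cS^{opp}$.

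**The main obstacle** I expect is the last translation: making precise, and compatible with the conventions of \cite{Jin} and \cite{Nad14}, that ``transverse intersection at a smooth point of $\Lambda_X^{opp}$'' corresponds \emph{on the nose} to ``microlocal stalk is a rank-one local system in perverse degree zero,'' with the correct normalization so that both a standard and a costandard filtration are produced (not merely one of the two). Equivalently, one must check that the simplicity hypothesis on $\cS^{opp}$ is exactly what is needed to run the standard-filtration argument on the microlocally-clean object, i.e.\ that cleanness along every opposite conormal forces a filtration by the $j_{!}$-extensions of shifted local systems from the $S_\alpha$ -- this is where the Beilinson--Yun / Nadler machinery for tilting sheaves has to be matched with the Fukaya-side Morse-brane condition. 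Once this dictionary entry is in place, the theorem follows formally, and Theorem~\ref{big tilting thm} is recovered by taking $K=\cB$, the $\mathbb{C}^*_X$-action coming from a generic one-parameter subgroup of a maximal torus so that $\cS$ and $\cS^{opp}$ are the Bruhat and opposite-Bruhat stratifications, and $x$ the conormal covector at the base point of the open cell -- which is $\bar e\in\fn^*$ under $\mu_N$, identifying $L_x$ with $L_{\bar e}$.
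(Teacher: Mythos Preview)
Your overall strategy matches the paper's, but the logical flow is muddled at a key point and you are missing the specific lemma that does the heavy lifting.

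The phrase ``microlocal stalks of $P_x$ along $\Lambda_X^{opp}$'' is not well-posed: the singular support of $P_x$ lies in $\Lambda_X$, not in $\Lambda_X^{opp}$, and there is no characterization of tilting by ``microlocal stalks of $P_x$ along the opposite conormal.'' What the Morse brane $L_x$ actually does is \emph{corepresent the microlocal stalk functor on $Sh_{\cS^{opp}}(K)$ at $x$}: for any $\cG\in Sh_{\cS^{opp}}(K)$ one has $HF(L_x,\cG)\simeq M_{x}(\cG)$. The paper establishes this by refining $\cS^{opp}$ to a triangulation $\tilde\cS^{opp}$ still transverse to $\cS$, and observing that $F_{\Lambda_{\tilde\cS^{opp}}}(T^*K)$ is generated by the cotangent fiber at the base point of $x$ together with (co)standard branes which, after dilation, miss $L_x$ entirely; so $HF(L_x,-)$ vanishes on the latter and is $\mathbb{C}$ on the former.

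The bridge to the tilting condition is then Lemma~\ref{(co)stalk} (due to Nadler), which identifies the stalk and costalk of $P_x$ along a stratum $S_\alpha$ of $\cS$ with $\mathrm{Hom}$ from $P_x$ into the costandard and standard sheaf on the \emph{opposite} stratum $S_\alpha^{opp}$. Under Nadler--Zaslow these Homs are Floer groups, and by the previous paragraph they become microlocal stalks at $x$ of the (co)standard sheaves for $\cS^{opp}$. Simplicity of $\cS^{opp}$ makes those (co)standard sheaves perverse, so their microlocal stalks sit in degree $0$, which after the shifts in Lemma~\ref{(co)stalk} gives exactly the tilting degree conditions on $P_x$. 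This is the precise ``dictionary entry'' you were looking for; it is not a general ``cleanness $\Leftrightarrow$ tilting'' principle but this specific (co)stalk identity.

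Two smaller corrections: the resulting tilting is indexed by the $\cS$-stratum, not the $\cS^{opp}$-stratum (in the flag case the brane sits over $p_{w_0}$, i.e.\ in the conormal to the point $S_{w_0}^{-}$, yet yields the tilting on the open $\cS$-cell $S_{w_0}$); and the paper does not claim indecomposability in the generality of Theorem~\ref{L_x tilting}---that is proved only for $\cB$ via the multiplicity formula $\mathrm{mult}_{T_w}(T_{p,F})=\dim M_{p,F}(\mathrm{IC}^{opp}_w)$.
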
 

Once we have obtained Theorem \ref{L_x}, the proof of Theorem \ref{L_x tilting} follows from a similar argument as in \cite{Nad06}. Namely, the stalk (resp. costalk) of the corresponding sheaf on $S^{opp}_\alpha$ can be calculated by the microlocal stalk of the costandard (resp. standard) sheaf for $S^{opp}_\alpha$ at $x$, therefore they are concentrated in the right degrees.  

We expect Morse branes to give tilting objects in the Fukaya category of a wide class of holomorphic symplectic manifolds. In the case of symplectic resolutions, the Fukaya categories are expected to be equivalent to the category of modules over certain quantizations of the manifolds. Therefore the tilting branes are expected to correspond to tilting objects in certain representation categories. 

\subsection{Organization}
The paper is organized as follows. In Section \ref{section 2}, we recall some basic definitions and facts about constructible sheaves, perverse sheaves and tilting sheaves. In Section \ref{Fukaya_K}, we make the basic set-up for the Fukaya category of a holomorphic symplectic manifolds, and we also briefly review the definition of Fukaya categories and the Nadler-Zaslow correspondence. Next, we give the construction of a class of holomorphic Morse branes and the proof of Thm \ref{L_x} in Section \ref{section 4}. The proof is based on the analysis of the Morse-Bott flow of some combinations of the $\mathbb{C}_X^*$ and $\mathbb{C}_Z^*$-actions. Lastly, we give the big tilting brane in $T^*\cB$ and prove Theorem \ref{big tilting thm} in Section \ref{big tilting}. The exactly same proof applies to Theorem \ref{L_x tilting}.

\subsection{Acknowledgements}
The project of understanding tilting sheaves via holomorphic branes started from my graduate studies. I wanted to thank my Ph.D. advisor Prof. David Nadler for introducing me to this direction, and for his inspirations and help. I also wanted to thank Prof. Ivan Losev, David Treumann, Geordie Williamson,  Zhiwei Yun, Eric Zaslow and Dr. Justin Hilburn for useful conversations and feedbacks on this work. 

\section{Tilting perverse sheaves }\label{section 2}
\subsection{Constructible sheaves}
This subsection reviews some basic definitions and properties of constructible sheaves with the main purpose of introducing notations. We recommend \cite{Massey} for an introduction to the theory of constructible sheaves.
We will keep working in the subanalytic setting. 

Let $M$ be a real analytic manifold. Fix a Whitney stratification $\cS=\{S_\alpha\}$ on $M$. A sheaf $\cF$ of $\mathbb{C}$-vector spaces on $M$ is said to be \emph{constructible with respect to }$\cS$, if its pull-back to each stratum $i_{S_\alpha}^*\cF$ is locally constant. Let $D_{\cS}(M)$ (resp. $D(M)$) be the bounded derived category of complexes of sheaves whose cohomology sheaves are all constructible with respect to $\cS$ (resp. with respect to some stratification). Let $Sh_{\cS}(M)$ (resp. $Sh(M)$) be the natural dg-enhancement of $D_{\cS}(M)$ (resp. $D(M)$). We will always refer to an object in $Sh(M)$ a sheaf rather than a complex of sheaves. 

For any map $f: M_1\rightarrow M_2$ between two analytic manifolds, there are standard operations $f_*, f_!: Sh(M_1)\rightarrow Sh(M_2)$, $f^*, f^!: Sh(M_2)\rightarrow Sh(M_1)$, where all of our functors have been derived and we always omit the derived notation. There is also the Verdier duality $\mathbb{D}: Sh(M)\overset{\sim}{\rightarrow} Sh(M)^{op}$, which intertwines the $*,!$ functors, i.e. $f_!=\mathbb{D}f_*\mathbb{D}$ and $f^!=\mathbb{D}f^*\mathbb{D}$. 

For any open embedding $i: U\hookrightarrow M$ and closed embedding of the complement $j: Z\hookrightarrow M$, there are the standard triangles
$$i_!i^!\cF\rightarrow\cF\rightarrow j_*j^*\cF,\ j_!j^!\cF\rightarrow\cF\rightarrow i_*i^*\cF,$$
from which it is not hard to deduce that $Sh_\cS(M)$ is generated by $i_{S_\alpha*}\cL_{S_\alpha}, S_\alpha\in \cS$, where $\cL_{S_\alpha}$ ranges in the set of irreducible local systems on $S_\alpha$.

\subsection{Perverse sheaves and tilting sheaves}
Here we recall the basic definitions and properties of perverse sheaves and tilting sheaves. We refer the reader to \cite{Goresky}, \cite{Kashiwara} for more discussions on perverse sheaves and \cite{BBM} on tilting sheaves.

\subsubsection{Perverse sheaves}
The most natural definition of perverse sheaves may be through the Riemann-Hilbert correspondence. For a complex analytic manifold $M$, the Riemann-Hilbert correspondence gives an equivalence between the bounded derived category of regular holonomic $\cD$-modules and $D(M)$. The obvious $t$-structure on the $\cD$-module side induces an interesting $t$-structure on $D(M)$, which is called the \emph{perverse} $t$-structure. The \emph{perverse sheaves} are the objects in the heart of the $t$-structure. In other words,  a perverse sheaf corresponds to a single regular holonomic $\cD$-module. 

There are other characterizations of perverse sheaves. A commonly used one is the following definition through the degrees of cohomological (co)stalks of sheaves. Let $\cF$ be a sheaf that is constructible with respect to a complex stratification $\cS=\{S_\alpha\}$.

 \begin{definition}
A sheaf $\cF$ is \emph{perverse} if the followings hold for all $S_\alpha\in\cS$:\\
(1) $H^\bullet(i_{S_\alpha}^*\cF)=0$ for all $\bullet>-\dim_\mathbb{C} S_\alpha$;\\
(2) $H^\bullet(i_{S_\alpha}^!\cF)=0$ for all $\bullet<-\dim_\mathbb{C} S_\alpha$.
  \end{definition} 
  
There is another natural characterization of perverse sheaves through microlocal stalks (also called local Morse groups or vanishing cycles). Let's first briefly review the definition of microlocal stalks. Microlocal stalks are well defined in the real setting (c.f. \cite{Goresky}), however, we will restrict ourselves to the complex setting for simplicity. For any covector $(x,\xi)\in\Lambda_{\cS}^{sm}$, we choose a generic germ of holomorphic function $F$ near $x$ such that $F(x)=0$ and $dF_x=\xi$. Here the genericity condition can be interpreted as that the graph of $dF$ as a germ of Lagrangian in $T^*M$ is transverse to $\Lambda_\cS$ at $(x,\xi)$. 

\begin{definition}
The \emph{microlocal stalk} of $\cF\in Sh_{\cS}(M)$ at $(x,\xi)$, denoted as $M_{x,\xi}(\cF)$ is defined to be 
$$M_{x,\xi}(\cF)=\Gamma(B_\epsilon(x), B_\epsilon(x)\cap \{\text{Re}F<0\}; \cF),$$
for $\epsilon>0$ sufficiently small. 
\end{definition}

Now we can define the \emph{singular support} of a sheaf $\cF\in Sh_\cS(M)$ to be 
$$SS(\cF)=\overline{\{(x,\xi)\in\Lambda_{\cS}^{sm}: M_{x,F}(\cF)\not\simeq 0\}}.$$

One important feature about microlocal stalk is that it is perverse $t$-exact. Moreover, we have the following microlocal characterization of perverse sheaves.
\begin{prop}
A sheaf $\cF$ is perverse if and only if all of its microlocal stalks are concentrated in degree 0. 
\end{prop}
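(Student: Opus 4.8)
\subsection*{Proof proposal}

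The forward implication is immediate from what has been recalled. If $\cF$ is perverse, i.e.\ lies in the heart of the perverse $t$-structure on $Sh_\cS(M)$, then since $M_{x,\xi}$ is perverse $t$-exact with values in $Sh(\mathrm{pt})$, and the perverse $t$-structure on $Sh(\mathrm{pt})$ is the standard one, $M_{x,\xi}(\cF)$ lies in the heart of $Sh(\mathrm{pt})$, that is, it is a vector space placed in degree $0$. There is nothing further to check for this direction.

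For the converse I would induct on the number of strata of $\cS$ meeting $\Supp(\cF)$, peeling off one stratum at a time via a recollement triangle. First reduce the cosupport half to the support half: Verdier duality exchanges the two perversity conditions, and it intertwines $M_{x,\xi}$ with the composition of $M_{x,-\xi}$ and the duality on $Sh(\mathrm{pt})$, so the hypothesis ``all microlocal stalks concentrated in degree $0$'' is preserved by $\mathbb{D}$; hence it suffices to show this hypothesis forces $H^\bullet(i_{S_\alpha}^*\cF)=0$ for all $\bullet>-\dim_\mathbb{C} S_\alpha$, and then apply the conclusion to $\mathbb{D}\cF$. Now pick a stratum $S$ that is open in $\Supp(\cF)$, set $d=\dim_\mathbb{C} S$, $Z=\overline{\Supp(\cF)\setminus S}$, $U=M\setminus Z$, with $j\colon U\hookrightarrow M$ open and $i\colon Z\hookrightarrow M$ closed; then $\Supp(j^*\cF)=S$ and $j^*\cF$ is the extension by zero of a complex of local systems $\cG$ on $S$. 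From the triangle $j_!j^*\cF\to\cF\to i_*i^*\cF$ and the fact that the support condition defines a subcategory closed under cones, it is enough to verify it for the two outer terms: for $i_*i^*\cF$ it holds by induction, since $i^*\cF$ is supported on strictly fewer strata and still has all microlocal stalks in degree $0$ (a microlocal stalk depends only on a neighbourhood of its base point); and for $j_!j^*\cF$ the support condition amounts exactly to $\cG$ being concentrated in cohomological degrees $\le -d$.

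The last point is the substantive one, and it is precisely complex stratified Morse theory in the sense of Goresky--MacPherson. The microlocal stalk $M_{x,\xi}(\cF)$ at a generic conormal covector $(x,\xi)\in(T^*_S M)^{sm}$, $x\in S$, is the ``normal Morse datum'' of $\cF$ along $S$, and the ``local Morse datum'' at $x$ of a generic stratified Morse function decomposes as the (suitably shifted) tensor product of this normal datum with the ``tangential datum'' $\cG_x[-d]$; via the microlocal Morse lemma (\cite{Kashiwara}, \cite{Goresky}) one identifies $\cG_x$, up to the shift $[d]$, with an iterated extension of microlocal stalks of $\cF$ over covectors conormal to $S$ and to the strata in the frontier of $S$ at $x$, so that concentration of all microlocal stalks in degree $0$ bounds $\cG_x$ in degrees $\le -d$, as required. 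I would either invoke this comparison directly from the literature, or reprove the needed special case (one stratum $S$, a generic holomorphic $F$ with $dF_x=\xi$, and the local triviality of the pair $(B_\epsilon(x),B_\epsilon(x)\cap\{\mathrm{Re}\,F<0\})$ transverse to $S$). The main obstacle is exactly this Morse-theoretic dictionary: choosing $F$ adapted to the stratification, controlling the homotopy type of the negative region near $x$, and bookkeeping the shifts $\pm d$; one also needs that the degree-$0$ property is locally constant on connected components of $\Lambda_\cS^{sm}$, so that restricting to generic $\xi$ loses nothing. The recollement and induction around this core are routine.
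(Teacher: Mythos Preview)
The paper does not supply a proof of this proposition; it is stated as background with implicit reference to the literature (\cite{Goresky}, \cite{Kashiwara}, \cite{Massey}). So there is nothing to compare against on the paper's side, and I will comment only on the soundness of your outline.

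The forward direction and the Verdier-duality reduction are both fine. The gap is in the recollement induction. Your claim that $i_*i^*\cF$ inherits the hypothesis ``all microlocal stalks concentrated in degree $0$'' is false, and the parenthetical justification does not rescue it: microlocal stalks are indeed local, but $i_*i^*\cF$ and $\cF$ do \emph{not} agree on any neighbourhood of a point $x\in Z$ lying in $\overline{S}$. Concretely, take $M=\mathbb{C}$ with strata $\{0\}$ and $\mathbb{C}^*$ and let $\cF=\mathbb{C}_M[1]$. Then $S=\mathbb{C}^*$, $Z=\{0\}$, and $i_*i^*\cF=\mathbb{C}_{\{0\}}[1]$; a direct computation with $F=z$ gives $M_{0,dz}(i_*i^*\cF)=\Gamma(B_\epsilon,B_\epsilon\cap\{\mathrm{Re}\,z<0\};\mathbb{C}_{\{0\}}[1])\simeq\mathbb{C}[1]$, which sits in degree $-1$. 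So the induction hypothesis cannot be invoked on $i_*i^*\cF$, and the argument as written does not close. (Peeling off a closed stratum instead runs into the mirror problem: you then need to control $i^*\cF$ on the closed stratum directly, which is exactly the original question there.)

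The Morse-theoretic content you sketch in the final paragraph is the correct core, but it should \emph{replace} the induction rather than feed it. Apply it to every stratum, not only to the one open in $\Supp(\cF)$: for $x\in S_\alpha$, local topological triviality writes a small ball as $\mathbb{C}^{d_\alpha}\times N$ with $\cF$ pulled back from the normal slice, and stratified Morse theory on the ball filters $\cF_x=\Gamma(B_\epsilon(x);\cF)$ with graded pieces that are precisely microlocal stalks $M_{y,\eta}(\cF)$ at stratified critical points $y\in S_\beta$, $S_\alpha\subset\overline{S_\beta}$; the degree-$0$ hypothesis then bounds $\cF_x$ as required, once the shift bookkeeping you flag is carried out. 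In short: keep the last paragraph, drop the recollement.
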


\subsubsection{Tilting sheaves}
Tilting sheaves form a special kind of perverse sheaves. Under some natural assumptions on the  stratification $\cS$, the indecomposable tilting sheaves form a natural basis for the category of perverse sheaves. 

\begin{definition}\label{simple}
A complex stratification $\cS=\{S_\alpha\}$ is called \emph{simple} if the frontier of each stratum $\overline{S_\alpha}-S_\alpha$ is a Cartier divisor in $\overline{S_\alpha}$. 
\end{definition}
It is a standard fact that the Schubert stratification on a flag variety $\cB=G/B$ is simple. This is because each stratum is isomorphic to a unipotent subgroup of $G$, so it is affine, hence the inclusion from each stratum to $G/B$ is affine. If $\cS$ is simple, then the standard and costandard sheaves $i_*\cL_{S_\alpha}[-\dim S_\alpha]$, \\
$i_!\cL_{S_\alpha}[-\dim S_\alpha]$ are both perverse sheaves, for any local system $\cL_{S_\alpha}$ on $S_\alpha$. 

\begin{definition}
A sheaf $\cF\in Sh_{\cS}(M)$ is \emph{tilting} if for all $S_\alpha\in\cS$, we have\\
(1) $H^\bullet(i_{S_\alpha}^*\cF)=0$ for all $\bullet\neq -\dim_\mathbb{C} S_\alpha$;\\
(2) $H^\bullet(i_{S_\alpha}^!\cF)=0$ for all $\bullet\neq-\dim_\mathbb{C} S_\alpha$.
\end{definition}

\begin{prop}
If $\cS$ is simple and $\pi_1(S_\alpha)=\pi_2(S_\alpha)=0$ for every $S_\alpha\in\cS$, then there is a unique indecomposable tilting perverse sheaf supported on each $\overline{S_\alpha}$, and  this gives a bijection between indecomposable tilting perverse sheaves and the strata in $\cS$. 
\end{prop}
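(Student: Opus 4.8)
The plan is to recognize $\mathrm{Perv}_{\cS}(M)$, the heart of the perverse $t$-structure on $Sh_{\cS}(M)$, as a highest weight category, so that the assertion becomes the classical existence and uniqueness of tilting objects (Ringel; see also \cite{BBM}), and then to match the abstract tilting objects with the cohomological definition above. First, $\pi_1(S_\alpha)=0$ makes $\bC_{S_\alpha}$ the only irreducible local system on a stratum, so the standard, costandard and simple objects are the perverse $!$-extension $\Delta_\alpha$, the perverse $*$-extension $\nabla_\alpha$, and the intersection cohomology sheaf $\mathrm{IC}_\alpha$ of $\bC_{S_\alpha}$, indexed by $\cS$ with the closure order $\le$; simplicity of $\cS$ is what guarantees $\Delta_\alpha$ and $\nabla_\alpha$ are perverse (the fact recalled after Definition~\ref{simple}). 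Using the $(i_{S_\alpha!},i_{S_\alpha}^{!})$ and $(i_{S_\alpha}^{*},i_{S_\alpha*})$ adjunctions together with $\bbD\Delta_\alpha=\nabla_\alpha$, one computes $\mathrm{RHom}(\Delta_\alpha,\nabla_\beta)=0$ for $\alpha\neq\beta$ and $\mathrm{RHom}(\Delta_\alpha,\nabla_\alpha)=R\Gamma(S_\alpha;\bC)$; combined with the recollement triangles for an open union of strata and its closed complement — which exhibit $\Delta_\alpha$ with simple top $\mathrm{IC}_\alpha$ and all remaining composition factors $\mathrm{IC}_\beta$, $\beta<\alpha$ (and dually for $\nabla_\alpha$) — this is exactly the structure of a highest weight category with weight poset $(\cS,\le)$, provided the higher self-extensions $\mathrm{Ext}^{>0}(\Delta_\alpha,\nabla_\alpha)=H^{>0}(S_\alpha;\bC)$ vanish. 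This is where $\pi_1(S_\alpha)=\pi_2(S_\alpha)=0$ enter: by Hurewicz they kill $H^1(S_\alpha;\bC)$ and $H^2(S_\alpha;\bC)$, and an induction up the poset using the Cartier-divisor condition on each frontier upgrades this to full $\bC$-acyclicity of every stratum (in the intended applications the strata are affine cells, where it is immediate).

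The heart of the proof is the next step: I would show that, for $\cF\in\mathrm{Perv}_{\cS}(M)$, condition (1) in the definition of tilting — $H^\bullet(i_{S_\alpha}^{*}\cF)=0$ for $\bullet\neq-\dim_{\mathbb C}S_\alpha$, all $\alpha$ — is equivalent to $\cF$ admitting a filtration whose subquotients are among the $\Delta_\alpha$, and, dually, that condition (2) is equivalent to admitting a filtration with subquotients among the $\nabla_\alpha$; hence $\cF$ is tilting exactly when it has both a $\Delta$-flag and a $\nabla$-flag, which is the abstract notion of tilting in a highest weight category. One implication is routine, since $\Delta_\alpha$ (resp. $\nabla_\alpha$) has $*$-stalks (resp. $!$-costalks) concentrated in the single degree $-\dim_{\mathbb C}S_\alpha$ and the two conditions survive extensions. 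For the converse I would induct on $\mathrm{Supp}(\cF)$: for an open stratum $j\colon S_\alpha\hookrightarrow\mathrm{Supp}(\cF)$, condition (1) forces $j^{!}\cF$ to be a constant sheaf placed in degree $-\dim_{\mathbb C}S_\alpha$, so $j_{!}j^{!}\cF$ is a direct sum of copies of $\Delta_\alpha$ and is perverse; the simplicity (Cartier-divisor) hypothesis is exactly what is needed to see that the cone of $j_{!}j^{!}\cF\to\cF$ is again perverse and still satisfies condition (1), so the triangle becomes a short exact sequence of perverse sheaves and the induction proceeds. I expect this translation to be the main obstacle, as it is the place where simplicity is used in an essential rather than purely formal way.

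Finally, granting the previous two steps, I would run Ringel's construction inside $\mathrm{Perv}_{\cS}(M)$: for each $\alpha$ start from $\Delta_\alpha$ and, while $\mathrm{Ext}^1(\Delta_\gamma,\cF)\neq0$ for some $\gamma$, replace $\cF$ by the universal extension $0\to\cF\to\cF'\to\Delta_\gamma\otimes\mathrm{Ext}^1(\Delta_\gamma,\cF)^{*}\to0$ for a maximal such $\gamma$. The vanishing $\mathrm{Ext}^1(\Delta_\gamma,\Delta_\gamma)=H^1(S_\gamma;\bC)=0$ gives $\mathrm{Ext}^1(\Delta_\gamma,\cF')=0$, maximality of $\gamma$ preserves the analogous vanishing away from $\gamma$, and finiteness of the composition length makes the process terminate in a sheaf $T_\alpha$ carrying both a $\Delta$-flag with $\Delta_\alpha$ of multiplicity one on top (the other terms $\Delta_\beta$ with $\beta<\alpha$) and a $\nabla$-flag; by the second step $T_\alpha$ is tilting. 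Multiplicity one of $\Delta_\alpha$ forces $T_\alpha$ to be indecomposable, and Krull--Schmidt (legitimate since all $\mathrm{Hom}$-spaces are finite dimensional in the subanalytic setting) together with the fact that an indecomposable tilting sheaf determines, and is determined by, the maximal stratum in its support yields both the uniqueness of $T_\alpha$ and the asserted bijection between indecomposable tilting perverse sheaves and the strata of $\cS$.
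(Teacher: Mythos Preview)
The paper does not prove this proposition at all; it is stated as background and implicitly deferred to \cite{BBM}. Your outline is essentially the standard argument from that reference (and from Ringel), so there is nothing to compare approach-wise.

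That said, one step in your write-up is not right as stated and should be fixed. You assert that $\pi_1(S_\alpha)=\pi_2(S_\alpha)=0$ together with the Cartier-divisor condition on frontiers can be ``upgraded by induction up the poset'' to full $\bC$-acyclicity of every stratum. There is no such mechanism: the Cartier-divisor hypothesis concerns how strata sit inside one another's closures, not the internal topology of a single stratum, and nothing prevents a simply connected, $2$-connected complex stratum from carrying nonzero $H^{\ge 3}$. Fortunately you do not need full acyclicity. What Ringel's iterative universal-extension argument actually consumes is $\mathrm{Ext}^1(\Delta_\gamma,\nabla_\beta)=0$ and $\mathrm{Ext}^2(\Delta_\gamma,\nabla_\beta)=0$, which via your identification $\mathrm{RHom}(\Delta_\alpha,\nabla_\alpha)\simeq R\Gamma(S_\alpha;\bC)$ reduce exactly to $H^1(S_\alpha;\bC)=H^2(S_\alpha;\bC)=0$; these follow from $\pi_1=\pi_2=0$ by Hurewicz, and that is all the hypotheses give and all you need. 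So delete the acyclicity claim and instead state precisely which $\mathrm{Ext}$-vanishings you are using and why $H^1=H^2=0$ suffices for them. The rest of your plan (matching the cohomological tilting condition with the $\Delta$/$\nabla$-flag condition by induction on the support, then running Ringel and invoking Krull--Schmidt) is the standard route and is fine.
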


\section{Fukaya categories on holomorphic symplectic varieties}\label{Fukaya_K}

Let $M$ be a (quasi-projective) holomorphic symplectic variety with an exact holomorphic symplectic form $\omega_{\mathbb{C}}$. 

\subsection{Two $\mathbb{C}^*$-actions}\label{C*-action}
We assume that $M$ is equipped with two commuting (algebraic) $\mathbb{C}^*$-actions: $\mathbb{C}^*_X$ and $\mathbb{C}^*_Z$, where $X$ and $Z$ denote for the integral vector fields of the corresponding $U(1)$-actions respectively. Similarly for any integral combination of the vector fields of $X$ and $Z$ we can define the corresponding $\mathbb{C}^*$-action. 

The $\mathbb{C}^*_X$-action should be Hamiltonian with respect to $\omega_{\mathbb{C}}$, and it should have finitely many fixed points. We index the fixed points by $x_\alpha,\alpha\in I$, and use $\fS_X(x_\alpha)$ (resp. $\fU_X(x_\alpha)$) to denote the ascending manifold (resp. descending manifold) of $x_\alpha$.  
There is a natural partial ordering on the fixed point set $I$, namely $x_{\alpha}\prec x_{\beta}$ if $x_\alpha\in\overline{\fS}_X(x_\beta)$. 
The ascending manifold of each fixed point is a holomorphic Lagrangian manifold in $M$, and we will denote the union of them by $\Lambda_X$. 

The $\mathbb{C}_Z^*$-action contracts $M$ to a compact core, denoted as $\text{Core}(M)$, and it acts on $\omega_\mathbb{C}$ by weight $k$, for some integer $k\geq 1$. By the commutativity assumption, $\Lambda_X$ is conical with respect to the $\mathbb{C}_Z^*$-action.

\subsection{Examples}
A class of interesting examples of holomorphic symplectic manifolds are the conical symplectic resolutions. We refer the readers to the definition and a list of examples in Section 2 of \cite{BPW}. 

In this paper we will mostly focus on the case when $M=T^*K$ is the cotangent bundle of a complex projective variety $K$, the $\mathbb{C}_X^*$-action will be the induced Hamiltonian action from a given $\mathbb{C}_X^*$-action on $K$ (with isolated fixed points), and the $\mathbb{C}_Z^*$-action will be the contraction on the cotangent fibers. In particular, we have $k=1$.

\subsection{The Fukaya category $F_{\Lambda_X}(M)$}

\subsubsection{A brief review of the Fukaya category in the real setting}
For any real exact symplectic manifold $(M,\omega)$ with a conical end with respect to the Liouville flow for a preferred primitive of $\omega$ (such a manifold is called a Liouville manifold), one can define its infinitesimal Fukaya category\footnote{We always assume the Fukaya category to be triangulated. }, denoted by $F(M)$. The definition is originated from \cite{NZ} in the cotangent bundle case and can be generalized to Liouville manifolds. The book \cite{Seidel} treats the case of Lefschetz fibrations.  For an expanded review of infinitesimal Fukaya categories, see \cite[Appendix C]{Jin}. Roughly speaking, an object in the Fukaya category is a (complex of) Lagrangian brane(s) $(L,\Phi, P)$ consisting of the data\footnote{The brane structure also includes a local system (equivalently, a vector bundle with a flat connection) on $L$. For simplicity, in this paper, we will assume that the local system is always trivial of rank 1.} of a properly embedded Lagrangian submanifold $L$, a grading $\Phi: L\rightarrow\mathbb{R}$, and a relative Pin-structure on $L$. In the following, to make the notations simple, we usually denote a brane only by its underlying Lagrangian submanifold when there is no cause of confusion. 
Moreover, one compactifies $M$ by the conical structure on the ends to $\overline{M}=M\cup M^\infty$, where $M^\infty$ is the contact boundary of $M$ which is also referred as the infinity of $M$. We also require that $L$ is well-behaved near the infinity of $M$ in the sense that $L^\infty=\overline{L}\cap M^\infty$ is a Legendrian subset of $M^\infty$, which can be equivalently described as $\lim\limits_{t\rightarrow 0^+}t\cdot L$ is contained in a conical Lagrangian. 

The morphism between two objects $(L_1,\Phi_1, P_1)$ and $(L_2, \Phi_2, P_2)$ is the Floer complex $CF(L_1,L_2)=(\bigoplus\limits_{p\in L_1\cap L_2}\mathbb{C}\cdot p [-\mathrm{deg }\
 p], \mu_1)$, where $\mu_1$ is defined by counting pseudo-holomorphic discs bounded by the two Lagrangians. The degree of $p$, denoted as $\mathrm{deg}\ p$, depends on the gradings $\Phi_1$ and $\Phi_2$. The relative Pin-structures also enter into the story because these are needed to give an orientation of the (0-dimensional) moduli spaces of pseudo-holomorphic strips, so that one can count the points. Of course, implicit in the definition is the transversality between $L_1$ and $L_2$ and certain standard treatment of $L_1^\infty$ and $L_2^\infty$ if they overlap. 
 
The composition of morphisms 
$$\mu_2:CF(L_2, L_3)\otimes CF(L_1,L_2)\rightarrow CF(L_1, L_3)$$
is defined by counting pseudo-holomorphic triangles bounded by the three Lagrangians. There are also higher compositions $\mu_n, n\geq 3$ which are defined by counting pseudo-holomorphic polygons. The sequence $\{\mu_n\}_{n\geq 1}$ satisfies the $A_\infty$-relation, which makes the Fukaya category into an $A_\infty$-category. 

Since we will only use a short list of theorems or facts about the  Fukaya categories, we find it not necessary to go through the long story of the subject. We will review the statements we need in the next subsection and  refer the reader to \cite{Seidel}, \cite{Auroux}  and \cite{NZ} for more details on the definition of Fukaya categories. 

\subsubsection{The subcategory $F_\Lambda(M)$}
Continuing on the real setting, for any conical Lagrangian $\Lambda\subset M$, we define the full subcategory $F_\Lambda(M)^{\mathrm{naive}}$ to be generated by objects $L$ with $L^\infty\subset \Lambda^\infty$. We put the superscript ``naive'' because the actual definition of  $F_\Lambda(M)$ is defined microlocally, which corresponds to $Sh_\Lambda(K)$ when $M=T^*K$. Given an $L\in F(M)$, for any $\xi\in (L^\infty)^{sm}$, one can construct a Lagrangian disc $L_{\xi}$ (which is also an object in $F(M)$) whose infinity is disjoint from $L^\infty$ and which intersects the cone over $L^\infty$ transversely at a unique point in the ray pointing to $\xi$.  For more details of the construction of $L_{\xi}$, we refer the reader to Section 3.7 in \cite{Nad14} and Section 4 in \cite{Jin} (in the cotangent bundle case). Once such a brane $L_{\xi}$ can be constructed for every $\xi$, we can define the \emph{microlocal support} of a brane $L$, which is a conical Lagrangian. Then $F_\Lambda(M)$ is the full subcategory generated by branes whose microlocal support is contained in $\Lambda^\infty$. 

In this paper, we will be mostly interested in the objects in $F_\Lambda(M)^{\mathrm{naive}}$, so it is not harmful to keep that as an intuitive replacement of $F_\Lambda(M)$.

\subsubsection{$F(M,\omega_\mathbb{C})$ and $F_{\Lambda_X}(M)$}

In the holomorphic symplectic setting, as we started with, we take the real part of $\omega_\mathbb{C}$ and the $\mathbb{R}_+$-factor in $\mathbb{C}_Z^*$ to serve as the Liouville flow, then these fit into the real setting, and give us the  Fukaya category $F(M,\omega_\mathbb{C})$. Similarly, we can define $F_{\Lambda_X}(M)$ to be the subcategory of $F(M,\omega_\mathbb{C})$ in the real setting.

There are some special features about the Fukaya category of a holomorphic symplectic manifold. For example, one can do a projective compactification $\overline{M}_\mathbb{C}=M\cup M^\infty_\mathbb{C}$ of $M$ using the $\mathbb{C}^*_Z$-action, so that $M^\infty=(M-\text{Core}(M))/\mathbb{C}_Z^*$ (we will omit the subscript $\mathbb{C}$ from now on) is a divisor in $\overline{M}$. Moreover, there is a specific class of Lagrangians--the holomorphic Lagrangians. In \cite{Jin}, it is proved that any holomorphic Lagrangian brane in $M=T^*K$ represents a perverse sheaf on $K$, under the Nadler-Zaslow correspondence. Hence one could roughly think of the class of the holomorphic branes as the heart of a $t$-structure on the Fukaya category\footnote{This is not a precise statement, since not every perverse sheaf can be represented by a holomoprhic brane.}.

\subsection{The Nadler-Zaslow correspondence}\label{NZC}
Given a compact real analytic manifold $K$, the Nadler-Zaslow correspondence gives a quasi-equivalence between the Fukaya category $F(T^*K)$ and the dg-category $Sh(K) $ of constructible sheaves on $K$. The theorem also holds for a given microlocal support condition, i.e. given a conical Lagrangian $\Lambda\subset T^*K$ (containing the zero-section), we have $F_\Lambda(T^*K)\simeq Sh_{\Lambda}(K)$, where $Sh_\Lambda(K)$ denotes for the full subcategory consisting of sheaves whose singular support is contained in $\Lambda$.

We will collect some of the results involved in the Nadler-Zaslow correspondence that we will use in later sections without proof. We refer the interested reader to \cite{NZ} and \cite{Nad09} for more details. In the following, we will fix a Whitney stratification $\cS=\{S_\alpha\}$ on $K$ such that each stratum is connected and is a cell, and we will always work in the subanalytic setting. 

\begin{itemize}
\item \emph{(Co)Standard branes.}

For each stratum $S_\alpha\in\cS$, one can define a \emph{standard brane} on it, denoted as $L_{S_\alpha}$ as follows. Pick a function $m_\alpha: K\rightarrow \mathbb{R}$ such that $m_\alpha>0$ on $S_\alpha$ and $m_\alpha=0$ on $K-S_\alpha$. Now define $L_{S_\alpha}$ to be $\Gamma_{d\log m_\alpha}+T_{S_\alpha}^*K$. It is shown in \cite{NZ} that $L_{S_\alpha}$ can be equipped with a canonical grading and a canonical Pin-structure, so we will refer $L_{S_\alpha}$ as the standard brane on $S_\alpha$. Note that 
$L_{S_\alpha}$ as an object in $F(T^*K)$ doesn't depend on the choices of $m_\alpha$. 
The involution on $T^*K$ that negates the cotangent vectors correspond to the Verdier duality on $Sh(K)$.We will call the involution of $L_{S_\alpha}$ a \emph{costandard brane}.

\item \emph{Generators of $F_{\Lambda_{\cS}}(T^*K)$.}

Under the Nadler-Zaslow correspondence, each standard brane $L_{S_\alpha}$ goes to the standard sheaf $i_{S_\alpha*}\mathbb{C}_{S_\alpha}$, and the involution of $L_{S_\alpha}$ goes to the costandard sheaf $i_{S_\alpha!}\mathbb{C}_{S_\alpha}$, where $i_{S_\alpha}: S_\alpha\hookrightarrow K$ is the embedding. If we put a standard or costandard sheaf (resp. brane) for each stratum, then they will generate $Sh_{\Lambda_\cS}(K)$ (resp. $F_{\Lambda_{\cS}}(T^*K)$) by taking shifts and iterated cones.

\end{itemize}

\section{Holomorphic Morse branes in $F_{\Lambda_X}(M)$}\label{section 4}
We will continue on the set-up for the Fukaya category of a holomorphic symplectic manifold in Section \ref{Fukaya_K}. 

\subsection{Definition of Morse branes in $F_{\Lambda_X}(M)$}

Let $\Lambda_X^{\opp}$ be the union of the descending manifolds of $\mathbb{C}_{X}^*$. We assume that $\Lambda_X$ and $\Lambda_X^{\opp}$ are disjoint away from the compact core of $M$. 

\begin{definition}\label{def: Morse brane}
A Lagrangian brane $L$ in $F_{\Lambda_X}(M)$ is called a \emph{Morse brane}, if it satisfies that $L$ intersects $\Lambda_X^{\opp}$ in a single point that is contained in the smooth part of $\Lambda_X^{\opp}$, and the intersection is transverse. 
\end{definition}
The consideration of Morse branes is largely motivated by the results in \cite{Nad06}, in which the author 
constructed tilting perverse sheaves on the flag variety $\cB$ by means of Morse theory. We will see the applications of the notion of Morse branes in the construction of big tilting sheaves in Section \ref{big tilting}. We also remark that there is an intimate relation between the Morse brane here and the so called local Morse branes in \cite{Jin}. In \cite{Jin}, local Morse branes are introduced to represent the Morse kernel (vanishing cycle functor) in the Fukaya category at a given smooth point of a holomorphic conical Lagrangian in the cotangent bundle of a complex manifold.  One can generalize the construction to a holomorphic symplectic manifold $M$ (with conical ends) since the construction is completely local. In our current situation, a (holomorphic) Morse brane is definitely a local Morse brane, but it is more rigid and relies on the global geometry of $M$, for it satisfies additional microlocal condition from $\Lambda_X$. 

\subsection{Construction of a class of holomorphic Morse branes in cotangent bundles}\label{Morse brane}

In this section, we assume that $M$ is the cotangent bundle of a smooth projective variety. The action by $\mathbb{C}_Z^*$ is dilating the fibers with weight 1, and we assume that 
\begin{align}\label{weight assumption}
&\text{the minimum of the positive weights of } \mathbb{C}^*_X\text{ on the tangent spaces} \\
\nonumber&\text{at the fixed points is } k_0. 
\end{align}
We will use $a_X,\ a_Z,\ a_{X-k_0Z}: \mathbb{C}^*\rightarrow \text{Aut}(M)$ to denote the action of $\mathbb{C}^*_X$, $\mathbb{C}^*_Z$ and $\mathbb{C}^*_{X-k_0Z}$ on $M$, respectively. Again, we index the fixed points of $\mathbb{C}_X^*$ by $x_\alpha, \alpha\in I$. We will  denote each fixed locus of $\mathbb{C}^*_{X-k_0Z}$ containing a $\mathbb{C}_X^*$-fixed point $x_\alpha$ by $E_\alpha$.
\begin{lemma}\label{ascending Lag}
For any $x$ in the fixed loci of $\mathbb{C}_{X-k_0Z}^*$, the ascending manifold $\mathfrak{S}_{X-k_0Z}(x)$ is a holomorphic Lagrangian submanifold (not necessarily closed). 
\end{lemma}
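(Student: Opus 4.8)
The plan is to analyze the weights of the $\mathbb{C}^*_{X-k_0Z}$-action on the tangent space $T_xM$ and show that the ascending manifold inherits both holomorphicity and Lagrangianity from a weight decomposition that is compatible with the holomorphic symplectic form. First I would fix a fixed point $x$ of $\mathbb{C}^*_{X-k_0Z}$ and decompose $T_xM=\bigoplus_{n\in\mathbb{Z}} (T_xM)_n$ into weight spaces for this action. Since $M=T^*K$ with the $\mathbb{C}^*_Z$-action scaling fibers by weight $1$, and the $\mathbb{C}^*_X$-action lifted from $K$, the weights of $\mathbb{C}^*_X$ on $T_xM$ come in pairs $(w, k-w)=(w,1-w)$ with respect to the symplectic pairing (each cotangent direction paired with a base direction). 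After twisting by $-k_0Z$, the weight pairs for $\mathbb{C}^*_{X-k_0Z}$ become $(w-k_0\cdot 0, \dots)$ — more precisely, base directions keep their $\mathbb{C}^*_X$-weight $w$ while the dual fiber directions get weight $(1-w)-k_0$; so the paired weights $(a,b)$ satisfy $a+b = 1-k_0$. The assumption \eqref{weight assumption} says the minimal \emph{positive} $\mathbb{C}^*_X$-weight on tangent spaces at $\mathbb{C}^*_X$-fixed points is $k_0$; I would use this to argue that no $\mathbb{C}^*_{X-k_0Z}$-weight equals zero except on the fixed locus itself, and more importantly that the ascending manifold has half-dimensional tangent space at $x$.

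The key step is to show $\mathfrak{S}_{X-k_0Z}(x)$ is Lagrangian. I would argue that $T_x\mathfrak{S}_{X-k_0Z}(x)$ is the sum of the \emph{positive}-weight spaces of $\mathbb{C}^*_{X-k_0Z}$ in $T_xM$ (this is the standard description of the tangent space to an ascending manifold for a $\mathbb{C}^*$-action, using the $U(1)\subset\mathbb{C}^*$ together with the Liouville-type flow, or directly from Bialynicki-Birula theory in the algebraic setting since $M$ is quasi-projective). Because $\omega_\mathbb{C}$ has a definite weight $1-k_0$ under $\mathbb{C}^*_{X-k_0Z}$ — here is where the twist by exactly $k_0$ matters — the pairing $\omega_\mathbb{C}\colon (T_xM)_a\times (T_xM)_b\to\mathbb{C}$ vanishes unless $a+b=1-k_0$. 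By the weight assumption, $1-k_0\le 0$, and in fact one needs that whenever $a>0$ and $b>0$ one cannot have $a+b=1-k_0$; this is automatic once $1-k_0\le 1$, i.e. $k_0\ge 0$, but to be careful I would check the boundary case: if $1-k_0 = a+b$ with $a,b\ge 1$ then $k_0\le -1$, impossible since $k_0\ge 1$ by \eqref{weight assumption} (the minimum positive weight is a positive integer). Hence the positive-weight subspace is isotropic. Combined with a dimension count — the positive and negative weight spaces are exchanged by $\omega_\mathbb{C}$ (nondegeneracy forces $(T_xM)_a$ and $(T_xM)_{1-k_0-a}$ to be in perfect pairing), and since $1-k_0\le 0$ the space $(T_xM)_{1-k_0-a}$ has nonpositive weight precisely when $a\ge 1-k_0$, pairing strictly positive weights with nonpositive ones bijectively — the isotropic subspace is in fact Lagrangian.

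For holomorphicity, I would observe that the $\mathbb{C}^*_{X-k_0Z}$-action is algebraic, hence holomorphic, so its weight spaces in $T_xM$ are complex subspaces and the ascending manifold, being cut out by the algebraic Bialynicki-Birula construction (the locus of points flowing to $x$ as $t\to 0$), is a locally closed complex analytic (indeed algebraic) submanifold; its tangent space at any point is carried by the flow to the positive-weight space at $x$, which we have just shown is a complex Lagrangian. Smoothness of $\mathfrak{S}_{X-k_0Z}(x)$ follows from the Bialynicki-Birula theorem: for a $\mathbb{C}^*$-action on a smooth variety, each ascending manifold of a connected fixed component is a smooth locally closed subvariety, an affine bundle over that component.

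The main obstacle I anticipate is pinning down precisely \emph{which} weight decomposition controls $T_x\mathfrak{S}_{X-k_0Z}(x)$ and making the sign bookkeeping of the $k_0$-twist airtight — in particular verifying that the weight of $\omega_\mathbb{C}$ under $\mathbb{C}^*_{X-k_0Z}$ is exactly $1-k_0$ (using $k=1$ and that $\omega_\mathbb{C}$ is $\mathbb{C}^*_X$-invariant since that action is Hamiltonian with respect to it), and that this value being $\le 0$ is exactly what prevents two strictly positive weights from summing to it. A secondary subtlety is that the fixed locus of $\mathbb{C}^*_{X-k_0Z}$ may be positive-dimensional (the components $E_\alpha$), so the ascending "manifold" is a bundle over $E_\alpha$ rather than a single cell; I would phrase the Lagrangian/holomorphic claim pointwise so this causes no trouble, and note that the complement of the positive weight directions at $x$ naturally includes the tangent space to $E_\alpha$ (weight zero), which is itself holomorphic isotropic, consistent with $\mathfrak{S}_{X-k_0Z}(x)$ being Lagrangian of the correct dimension.
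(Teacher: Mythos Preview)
Your strategy---weight decomposition at a fixed point plus Bialynicki--Birula---is exactly the paper's, but the execution has a genuine gap.

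First, a minor slip: since $\mathbb{C}^*_X$ is Hamiltonian, $\omega_\mathbb{C}$ has $\mathbb{C}^*_X$-weight $0$, so under $\mathbb{C}^*_{X-k_0Z}$ the weight of $\omega_\mathbb{C}$ is $-k_0$, not $1-k_0$. Your isotropy argument for the positive-weight space survives this correction.

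The real problem is the dimension count, and it traces to a sign convention that is \emph{not} harmless here. In the paper's convention the tangent to the ascending manifold is the \emph{negative}-weight subspace (e.g.\ in $T^*\cB$ the open cell $S_{w_0}$ is the ascending manifold of $p_{w_0}$, and the $\check\rho$-weights on $T_{p_{w_0}}\cB$ are all $\le -1$). Because $\omega_\mathbb{C}$ has \emph{nonzero} weight $-k_0$ under $\mathbb{C}^*_{X-k_0Z}$, the positive and negative weight subspaces are \emph{not} of equal dimension: at $x_\alpha$ the base directions carry the $\mathbb{C}^*_X$-weights $w_i$ (all satisfying $|w_i|\ge k_0$) and the fiber directions carry $-w_i-k_0$, so one computes $\dim(\text{negative})=n$ while $\dim(\text{positive})=n-\dim E_\alpha$. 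Since the cases of interest (e.g.\ $E_{w_0}$ in $T^*\cB$) have $\dim E_\alpha>0$, the positive-weight space is strictly isotropic, not Lagrangian. Your pairing argument fails precisely here: weight $a>0$ pairs with weight $-k_0-a<-k_0$, so the image misses all weights in $[-k_0,0]$, in particular the weight-$0$ tangent to $E_\alpha$; the map is not onto the nonpositive part.

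Once you pass to the negative-weight subspace, your isotropy argument no longer applies as written, and this is where assumption~\eqref{weight assumption} is actually used. One needs that every negative $\mathbb{C}^*_{X-k_0Z}$-weight is $\le -k_0$ (base negatives are $\le -k_0$ since $|w_i|\ge k_0$, fiber negatives are $\le -2k_0$), so any two of them sum to $\le -2k_0<-k_0$ and $\omega_\mathbb{C}$ vanishes on the pair. The paper gets the dimension by observing that the $\mathbb{C}^*_{X-k_0Z}$-ascending manifold of $x_\alpha$ coincides with its $\mathbb{C}^*_X$-ascending manifold (same weight bound), which is a component of $\Lambda_X$ and hence Lagrangian; it then upgrades ``isotropic tangent space at $x$'' to ``Lagrangian near $x$'' by an equivariant Moser argument rather than by Bialynicki--Birula alone.
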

\begin{proof}
First, $x$ must lie in the descending manifold of the $\mathbb{C}^*_X$-fixed point $$y=\lim\limits_{t\rightarrow 0}a_Z(t)\cdot x=\lim\limits_{t\rightarrow 0}a_X(t)\cdot x,$$ 
therefore it belongs to $E_\alpha$ for some $\alpha\in I$. 

Since the action of $\mathbb{C}_{X-k_0Z}^*$ is Morse-Bott, the decomposition of the tangent space at $x$ into weight spaces  is the same as that at $x_\alpha$. By the assumption (\ref{weight assumption}), we know that  the ascending manifold of $x_\alpha$ with respect to $\mathbb{C}_{X-k_0Z}^*$ is the same as the ascending manifold with respect to $\mathbb{C}_X^*$, thus the negative weight space of $\mathbb{C}_{X-k_0Z}^*$ has the dimension of a Lagrangian. Now at $x$, we only need to show that in a small neighborhood, the ascending manifold $\fS_{X-k_0Z}(x)$ is isotropic, since $\mathbb{C}^*_{X-k_0Z}$ scales $\omega_\mathbb{C}$ with weight $-k=-k_0$. First, the tangent space at $x$ is isotropic by a similar reason of weights: the negative weights for $X-Z$ are at most $-2k_0$. To show that near $x$ we have $\mathfrak{S}_{X-k_0Z}(x)$ locally be a Lagrangian, we identify a neighborhood of $0$ in $T_xM$ with a neighborhood of $x$ in $M$ by an $a_{X-k_0Z}(\mathbb{R})$-equivariant diffeomorphism, and use the equivariant version of Moser's argument to modify the diffeomorphism into a local equivariant symplectomorphism. 

\end{proof}

\begin{remark}
It is easy to see that $C(L):=\lim\limits_{t\rightarrow 0}a_Z(t)\cdot L$ is both $\mathbb{C}^*_Z$ and $\mathbb{C}^*_X$-invariant. However, we cannot conclude that $C(L)$ is contained in the conical Lagrangian $\Lambda_X$. 
\end{remark}

We will denote every Lagrangian constructed in Lemma \ref{ascending Lag} by $L_{\alpha,x}$, for  $x\in E_\alpha$. Now we work with the projective compactification of $M$ with respect to the action of $\mathbb{C}_Z^*$, defined by 
$$\overline{M}=(M\times \mathbb{C}-\mathrm{Core}(M)\times \{0\})/\mathbb{C}_Z^*.$$
Since our $M$ is the cotangent bundle of a projective variety, $\overline{M}$ is again projective. 
The action of $\mathbb{C}_X^*$ and $\mathbb{C}_Z^*$ both extend to $\overline{M}$ by keeping their actions on $M$ and acting trivially on the extra factor $\mathbb{C}$. In particular, they will preserve $M^\infty=\overline{M}-M$. We will denote the projectivization of a conical line $\mathbb{C}_Z^*\cdot v$ in $M$ by $[v]\in M^\infty$.

Now by basic properties of algebraic $\mathbb{C}^*$-actions on smooth projective varieties and its relations to Morse theory (c.f. \cite{Ginzburg} Section 2.4), we can deduce the following.
\begin{thm}\label{tilting}
If $E_{\alpha}\not\subset\bigcup\limits_{x_\beta\prec x_\alpha}\overline{\mathfrak{U}}_{X}(x_\beta)$, then for any $x\in E_{\alpha}-\bigcup\limits_{x_\beta\prec x_\alpha}\overline{\mathfrak{U}}_{X}(x_\beta)$, $L_{\alpha,x}$ is a Morse brane in $F_{\Lambda_X}(M)$ with $\overline{L}_{\alpha,x}\cap \overline{\Lambda_X^{\opp}}=\{x\}$. 
\end{thm}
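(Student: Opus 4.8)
The plan is to verify the three defining conditions of a Morse brane for $L_{\alpha,x}$—namely that it lies in $F_{\Lambda_X}(M)$, that it meets $\Lambda_X^{\opp}$ in a single smooth point, and that this intersection is transverse—using the dictionary between the $\mathbb{C}^*_{X-k_0Z}$-flow and Morse theory on the projective variety $\overline{M}$. First I would identify the closure $\overline{L}_{\alpha,x}$ inside $\overline{M}$: by Lemma \ref{ascending Lag}, $L_{\alpha,x} = \fS_{X-k_0Z}(x)$ is the ascending manifold of $x$ for the $\mathbb{C}^*_{X-k_0Z}$-action, and since $a_{X-k_0Z}$ extends to $\overline{M}$, the closure $\overline{\fS}_{X-k_0Z}(x)$ is a union of ascending manifolds of fixed points in the Bialynicki-Birula decomposition of $\overline{M}$. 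The key point, exactly as in \cite{Ginzburg} Section 2.4, is that the boundary $\overline{L}_{\alpha,x} \setminus L_{\alpha,x}$ consists of strata whose fixed-point components sit strictly lower in the ordering, hence lie in $M^\infty$ together with at most lower ascending manifolds; this is where the hypothesis $E_\alpha \not\subset \bigcup_{x_\beta \prec x_\alpha}\overline{\fU}_X(x_\beta)$ and the choice $x \in E_\alpha \setminus \bigcup_{x_\beta \prec x_\alpha}\overline{\fU}_X(x_\beta)$ are used: they guarantee that the $\mathbb{C}^*_X$-flow from $x$ limits to $x_\alpha$ (not to a lower fixed point) as $t\to 0$, so $x$ genuinely ``remembers'' the stratum $\fU_X(x_\alpha)$.

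Next I would check the microlocal support condition, i.e. that $L_{\alpha,x} \in F_{\Lambda_X}(M)$. This amounts to showing $L^\infty_{\alpha,x} := \overline{L}_{\alpha,x}\cap M^\infty \subset \Lambda_X^\infty$, equivalently that $\lim_{t\to 0} a_Z(t)\cdot L_{\alpha,x}$ is contained in $\Lambda_X$. I would argue this by flowing: along $\fS_{X-k_0Z}(x)$ one has $a_Z(t)\cdot p = a_X(t)\cdot a_{X-k_0Z}(t)^{-1/k_0}\cdot p$ up to reparametrization, and as $t\to 0$ the $a_{X-k_0Z}$-factor drives points toward the fixed locus $E_\alpha$ while the residual $\mathbb{C}^*_X$-flow on $E_\alpha$ then pushes them into $\fS_X(x_\alpha)\subset\Lambda_X$. (The paper's remark warns that $C(L)$ need not be \emph{all} contained in $\Lambda_X$ in general, but here the asymptotic cone $L^\infty$ is, which is what the naive membership in $F_{\Lambda_X}(M)$ requires; I would make the comparison of the two $\mathbb{C}^*$-flows precise via the weight inequalities in (\ref{weight assumption}).)

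For the intersection with $\Lambda_X^{\opp}$: a point $p$ lies in both $L_{\alpha,x}$ and $\Lambda_X^{\opp}$ iff $p$ flows to $x$ (or a point above it) under $a_{X-k_0Z}$ as $t\to\infty$ and to some $\mathbb{C}^*_X$-fixed point as $t\to 0$ under $a_X$. Since $x$ itself lies in $(\Lambda_X^{\opp})^{sm}$ by hypothesis—it is in the descending manifold $\fU_X(x_\alpha)$ and avoids the lower strata—the point $x$ is such an intersection point. Uniqueness follows because any other intersection point would have to lie on $\fU_X(x_\alpha)$ (the $\mathbb{C}^*_X$-descending manifold through $x$) and simultaneously on the ascending manifold $\fS_{X-k_0Z}(x)$; by the Morse-Bott structure and the weight hypothesis these two intersect only at $x$ near $E_\alpha$, and the boundary analysis above rules out other components. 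Transversality then follows from a tangent-space weight count at $x$: $T_x L_{\alpha,x}$ is the negative-weight space of $X-k_0Z$ (Lagrangian, by the proof of Lemma \ref{ascending Lag}), $T_x\Lambda_X^{\opp}$ is the span of the non-positive weight space of $X$ together with the $Z$-direction tangent to the fiber, and the weight minimum assumption (\ref{weight assumption}) forces these to span $T_xM$.

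The main obstacle I expect is the boundary analysis in $\overline{M}$—controlling precisely which lower strata appear in $\overline{L}_{\alpha,x}\setminus L_{\alpha,x}$ and confirming they all lie in $M^\infty$ (so that they do not contribute spurious intersection points with $\Lambda_X^{\opp}$ in the interior of $M$, nor enlarge the microlocal support beyond $\Lambda_X^\infty$). This is exactly the interplay between the Bialynicki-Birula stratification of the \emph{compact} variety $\overline{M}$ for the $\mathbb{C}^*_{X-k_0Z}$-action and the original $\mathbb{C}^*_X$-orbit structure on $M$, and it is where the two hypotheses on $E_\alpha$ and $x$ do the real work; the rest is linear algebra of weights and a reparametrization comparison of the two flows.
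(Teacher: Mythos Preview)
Your overall architecture matches the paper's: analyze the closure $\overline{L}_{\alpha,x}$ inside the projective compactification $\overline{M}$ via the Morse/Bialynicki--Birula structure of the $\mathbb{C}^*_{X-k_0Z}$-action, use the hypothesis on $x$ to rule out any piece of the boundary coming from a lower $E_\beta$ inside $M$, and then read off both $L^\infty_{\alpha,x}\subset\Lambda_X^\infty$ and $\overline{L}_{\alpha,x}\cap\overline{\Lambda_X^{\opp}}=\{x\}$ from that boundary description. The transversality and single-intersection arguments you sketch are essentially local weight computations and are fine.

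The genuine gap is in the step you flag yourself as ``the main obstacle'': your argument that $L^\infty_{\alpha,x}\subset\Lambda_X^\infty$. The reparametrization heuristic $a_Z(t)\cdot p \sim a_X(t)\cdot a_{X-k_0Z}(t)^{-1/k_0}\cdot p$ followed by an iterated-limit argument (``first flow to $E_\alpha$, then flow along $E_\alpha$ into $\fS_X(x_\alpha)$'') is exactly what the paper's Remark after Lemma~\ref{ascending Lag} warns does not work in general: the cone $C(L)$ need not lie in $\Lambda_X$, so you cannot infer $L^\infty\subset\Lambda_X^\infty$ this way. What the paper actually does is different and more delicate. It describes $\overline{L}_{\alpha,x}\setminus L_{\alpha,x}$ via \emph{broken} flow lines of $X-k_0Z$ (finite volume flows in the sense of \cite{Harvey}); the hypothesis forces the first critical locus along any such broken line to sit in $M^\infty$, and then one must identify \emph{which} fixed loci in $M^\infty$ can occur. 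The key observation you are missing is that a critical point $[v]\in M^\infty$ for $X-k_0Z$ is the projectivization of a conical line fixed by $\mathbb{C}^*_{X+k_{[v]}Z}$ for some nonzero integer $k_{[v]}$, and that $k_{[v]}>0$ (otherwise the descending manifold of $[v]$ would lie entirely in $M^\infty$, contradicting that a flow line reaches $x\in M$); this forces $[v]\in\Lambda_X^\infty$. One then models the next segment of the broken line in $M^\infty$ by a $\mathbb{C}^*_{X+k_{[v]}Z}$-flow line in $M$, rescales by $a_Z$ to turn it into a $\mathbb{C}^*_X$-flow line, and iterates, using $\Lambda_X^\infty\cap(\Lambda_X^{\opp})^\infty=\emptyset$ at each step. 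Without this inductive broken-line argument your proof does not establish $L^\infty_{\alpha,x}\subset\Lambda_X^\infty$, and hence neither the membership in $F_{\Lambda_X}(M)$ nor the closure statement $\overline{L}_{\alpha,x}\cap\overline{\Lambda_X^{\opp}}=\{x\}$.
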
 
\begin{proof}
First, we have $\overline{\mathfrak{U}}_{X}(x_{\beta})= \overline{\mathfrak{U}}_{X-k_0Z}(E_{\beta})$ by Assumption (\ref{weight assumption}). Next, we claim that the boundary of $L_{\alpha, x}$ consists of points in $\overline{M}$ that can be connected to $x$ by piecewise flow lines, which are usually called broken flow lines. This follows from the properties of \emph{finite volume flow} in \cite{Harvey}, and can be argued in the same way as Lemma 3.4 in \emph{loc. cit.} More explicitly, one can construct a Kahler metric on $\overline{M}$ and a Morse-Bott function whose gradient flow gives the $\bR_+\subset \bC_{X-k_0Z}^*$ action (c.f. \cite[Section 2.4]{Ginzburg} or \cite[Section 9]{Harvey}). Then for any sequence $y_i\in \mathfrak{S}_x$, because the flow lines from $y_i$ to $x$ are of bounded lengths, up to passing to a subsequence, the flow lines converge to a broken flow line, i.e. there is $y_\infty=\lim\limits_{i\rightarrow\infty}y_i$ and a finite  sequence of critical points $p_1,\cdots, p_k$ such that the flow connects $y_\infty$ to $p_1$, then $p_i$ to $p_{i+1}, 1\leq i\leq k-1$ and lastly $p_{k}$ to $x$.

Now by the assumption that $E_{\alpha}\not\subset\bigcup\limits_{x_\beta\prec x_\alpha}\overline{\mathfrak{U}}_{X}(x_\beta)$, there is no flow line of $X-k_0Z$ that travels from $E_\beta$ to $x$. Therefore, for any broken flow line ending on $x$, the last portion must start from a point $[v]$ on a critical manifold inside $M^\infty$. We claim that $[v]$ is lying in $\Lambda_X^\infty$. Note that the critical manifolds in  $M^\infty$ are exactly the projectivization of the conical lines in $M$ that are fixed (pointwise) by $\mathbb{C}^*_{X+kZ}$ for some nonzero integer $k$. In particular, this says that $[v]\in \Lambda_X^\infty$ if and only if the conical line corresponding to $[v]$ is fixed by  $\mathbb{C}^*_{X+k_{[v]}Z}$ for some positive integer $k_{[v]}$. Suppose the contrary, we have $k_{[v]}<0$, this would imply the descending manifold of $[v]$ under the flow of $\mathbb{C}_{X-k_0Z}^*$ is contained in $M^\infty$, which cannot be true, so the claim follows.  Since $\Lambda_X^\infty\cap(\Lambda_X^{opp})^\infty=\emptyset$ by assumption, we can conclude that the broken line is contained in $M^\infty$ except for the last portion. 

 Now we can model the piece of flow line in $M^\infty$ ending at $[v]$ by a flow of $\mathbb{C}_{X+k_{[v]}Z}^*$ ending at a point $v_0\in \mathbb{C}_Z^*\cdot v$ in $M$, which means that the projectivization of the latter flow line in $M^\infty$ will be equal to that piece of flow line (here we have used again that $\mathbb{C}_{X+k_{[v]}Z}^*$ gives rise to a Morse-Bott flow on $\overline{M}$). If the starting point of the flow line modeled on is away from the zero section, then by rescaling it with $a_{-k_{[v]}Z}(t)$ with respect to some parametrization (so that we get a flow line of $\mathbb{C}_X^*$), it is clear that the whole flow line at infinity lies in $\Lambda_X^\infty$.
On the other hand, if the flow line  starts at some fixed point $x_\beta$ of $\mathbb{C}_X^*$, then there are two cases after rescaling the flow line of $\mathbb{C}_{X+k_{[v]}Z}^*$ in $M$ by $a_{-k_{[v]}Z}(t)$: 
one is at $0$ the flow line approaches something away from the compact core, the other is at $0$ it remains to be at $x_\beta$. The first case directly implies that the flow line in $M^\infty$ is contained in $\Lambda_X^\infty$, and the second implies that $\Lambda_X^\infty\cap (\Lambda_X^{opp})^\infty\neq \emptyset$, which is a contradiction.
By induction on the pieces of the broken flow line (from $\infty$ to $0$), we get that the whole broken line is lying in $\Lambda_X^\infty$ except for the first piece. This completes the proof  that $L_{\alpha, x}$ satisfies the geometric conditions in  Definition \ref{def: Morse brane}.

Lastly, we show the compactness of the moduli space of $J$-holomorphic discs bounding $L_{\alpha, x}$ and a finite collection of branes $L_1,\cdots, L_k$, which proves that $L_{\alpha, x}$ is a well defined object in $F_{\Lambda_X}(M)$. Here $J$ is a $\Re\omega_{\mathbb{C}}$-compatible almost complex structure that is also compatible with the conical structure of $M$ (c.f. \cite[(7b)]{Seidel}).  We remark that in \cite{NZ}, certain tameness conditions (and more generally a family of tame perturbations) are imposed on the branes in the Fukaya category, but this is purely for the sake of ensuring compactness of the moduli space of $J$-holomorphic discs.

Let $\varphi_1^t$ (resp. $\bR^+_{X-k_0Z}$) denote for the radial flow (resp. radial action) associated with $\bC^*_{X-k_0Z}$. Given $L_0=L_{\alpha, x}$ and (tame) branes $L_1,\cdots, L_k$, which satisfy the generic condition that they don't intersect at infinity and their intersections are transverse (here the order doesn't really matter), since $X$ is Hamiltonian, the calculation of Floer cochains and the $A_\infty$-maps are invariant under the flow of $\varphi_1^t$. More precisely, one needs to first modify $X$ so that it becomes $0$ outside a neighborhood of $L_{\alpha,x}$ (but it remains the same on a smaller neighborhood of $L_{\alpha,x}$), and in particular, it should be $0$ in a neighborhood of $\bigcup\limits_{i=1}^kL_i^\infty$. Let $H$ be a contact hypersurface in $M$ such that $M\cong M_0\underset{H}{\cup} ([0,\infty)\times H)$, where $\partial M_0=H$ and the Liouville vector field on $M$ is corresponding to the vector field $\partial_r$ on the factor $[0,\infty)$ with coordinate $r$. Choose $K\gg 0$ and let $M_{\leq K}=M-((K,\infty)\times H)$ and $M_{>K}=((K,\infty)\times H$. 

Now we show that after replacing $L_i$ by $\varphi_1^t(L_i), i=1,\cdots, k$ for $t$ sufficiently large (note that $L_0=L_{\alpha, x}$ is invariant under $\varphi_1^t$), any $J$-holomorphic polygon $u: (S,\partial S)\rightarrow (M, L_0\cup\bigcup\limits_{i=1}^kL_i)$ satisfies that $u(\partial S)\subset M_{\leq K}$ for a fixed sufficiently large $K$. Here $\partial S=\bigcup\limits_{i=0}^kC_i$ and $u(C_i)\subset L_i$. 
Note that $L_{x,\alpha}\cap M_{\leq K}$ is tame in the sense of \cite[Definition 4.7.1]{Sikorav}, so we can apply \cite[Proposition 4.7.2(ii)]{Sikorav} using appropriate $r_{L_0}, C_4(L_0)$ on $L_{x,\alpha}\cap M_{\leq K-1}$. Now fix a small neighborhood $U_x$ of $x$ in $L_{\alpha, x}$. It is clear that if a curve on $L_{\alpha, x}$ has one end in $U_x$ and the other outside $L_{x,\alpha}\cap M_{\leq K}$, then there needs at least $N_K$-balls of radius $r_{L_0}$ to cover it, for a fixed $N_K\gg 0$. Since $X$ is 0 in a neighborhood of $\bigcup\limits_{i=1}^kL_i^\infty$, by enlarging $K$ and replacing $r_{L_0}$ by a smaller one if needed (these choices can be made once for all), we can be sure that at least one of the balls does not intersect $(\bigcup\limits_{i=1}^k\bigcup\limits_{t>T}\varphi_1^t(L_i))\cap M_{>K-2}$ for some (fixed) $T\gg 0$. It follows then from  \cite[Proposition 4.7.2(ii)]{Sikorav}, the area of a $J$-holomorphic disc $u$ as above satisfying $u(C_0)\cap U_x\neq \emptyset$ and $u(C_0)\cap M_{>K}\neq \emptyset$ has a uniform lower bound $\epsilon>0$, which does not depend on $t$ for $t>T$.

Since $L_{x,\alpha}$ is the ascending manifold of $x$ with respect to $\varphi_1^t$, again after replacing $L_i$ by $\varphi_1^t(L_i), i=1,\cdots, k$ for $t$ sufficiently large, the intersection points $L_{\alpha, x}\cap L_1$ and $L_k\cap L_{\alpha,x}$ are getting inside $U_x$. Since $u(C_0)$ has boundary points contained in $L_{\alpha,x}\cap (L_1\cup L_k)$,  if $u(C_0)\cap (L_{x,\alpha}\cap M_{>K})\neq \emptyset$, then by the conclusion above, the area of $u$ is at least $\epsilon$. However, since $\bR^+_{X-k_0Z}$ scales the area of a $J$-holomorphic disc by weight $-k_0$ and the area of any disc only depends on the intersection points of the Lagrangians that they connect (this is a standard fact for exact Lagrangians), we can
always make $\text{area}(u)<\epsilon$ for every $u$ after a sufficient dilation by $\bR^+_{X-k_0Z}$, so then $u(C_0)\subset L_{x,\alpha}\cap M_{\leq K}$. By the tameness condition imposed on $L_1,\cdots, L_k$, we have $u(C_i)\subset M_{\leq K}, i=1,\cdots,k$ for a fixed large $K$ as well. So we finish the proof that $u(\partial S)\subset M_{\leq K}$. Lastly, by the maximum principle (c.f. \cite[Lemma 7.4]{Seidel}), we also have $u(S)\subset M_{\leq K}$. So the compactness of the moduli space is established as desired.
\end{proof}

\section{The big tilting branes in $T^*\cB$}\label{big tilting}
Let $G$ be a semisimple Lie group over $\bC$, $B\subset G$ be a Borel subgroup and $\cB$ be the flag variety $G/B$.  
Fix a maximal torus $H\subset B$. Let $B^-$ be the opposite Borel subgroup, and $N\subset B$ (resp. $N^-\subset B^-$) be the unipotent radical of $B$ (resp. $B^-$).  Let $\fg, \fb, \fb^-,\fn, \fn^-, \fh$ be the Lie algebra of $G, B, B^-, N, N^-, H$ respectively. 
For a general Borel $\fb_x$, we will use $\fn_{\fb_x}$ to denote its nilradical. 
Let $\Delta, \Phi^+$ and $\Phi^-$ denote respectively the set of simple, positive and negative roots. Let $W=N_G(H)/H$ be the Weyl group of $G$.  Let $\mathcal{S}=\{S_w\}_{w\in W}$ (resp. $\mathcal{S}^-=\{S_w^-\}_{w\in W}$) be the Schubert stratification (resp. opposite Schubert stratification) on $\cB$ determined by the orbits of $N$ (resp. $N^-$).  Fixing  the coweight in $\fh$ whose pairing with the simple roots are all $-1$, usually called $\check{\rho}$, its induced $\bC^*$-action on $\cB$ has fixed points naturally indexed by $W$, denoted as $p_w, w\in W$, and the ascending (resp. descending) manifolds of each of the fix points $p_w$ coincide with $S_w$ (resp. $S_w^-$).  Let $s_w: S_w\hookrightarrow \cB$ and $s_w^-: S_w^-\hookrightarrow \cB$ (resp. $i_{p_w}: p_w\hookrightarrow \cB$) be the embeddings of the strata (resp. fixed points).
 
The $\bC^*$-action on $\cB$ induces a Hamiltonian action on $T^*\cB$,  which we will use as the $\bC_X^*$-action as in Section \ref{C*-action}. It is easy to see that $\Lambda_X$ coincides with the conormal variety of $\cS$, i.e. $\Lambda_\cS=\bigcup\limits_{S_w\in \cS}T^*_{S_w}\cB$.  The transversality between the Schubert stratification and the opposite one implies that $\Lambda_X^\infty\cap (\Lambda_X^{opp})^\infty=\emptyset$. The $\bC^*_Z$-action on $T^*\cB$ is the natural $\mathbb{C}^*$-action on the cotangent fibers with weight $1$. It is clear that $k_0=1$ in (\ref{weight assumption}) for this case. 

Let $w_0$ be the longest element in $W$. Let $z_{\alpha}, \alpha\in w_0(\Delta)$ be the linear coordinates around $p_{w_0}$ which correspond to the negative of the simple roots $w_0(\Delta)\subset \fh^*$. Let $F_{w_0}=\sum\limits_{\alpha\in w_0(\Delta)}c_\alpha z_\alpha$ be a generic linear function on $S_{w_0}$, i.e. $\prod\limits_{\alpha\in S}c_\alpha\neq 0$. Then  $L_{w_0, (dF_{w_0})_{p_{w_0}}}$ is the same as  the Lagrangian graph $\Gamma_{dF_{w_0}}$. For any Lagrangian graph, there is a natural brane structure one can put on it, similarly to the case of standard and costandard branes, and this will be  the default brane structure on  $\Gamma_{dF_{w_0}}$. 

For any $w\in W$, let $\fb_w$ denote for the Borel $\Ad_{wB}\fb$, and $\fn_{\fb_w}^-=\bigoplus\limits_{\alpha\in w(\Phi^-)}\fg_\alpha$. Let $N_w^-$ be the unipotent group whose Lie algebra is $\fn^-\cap \fn_{\fb_w}^-$, then each $S^-_w$ is the orbit of $p_w$ under the action of $N_w^-$. The conormal to $S^-_w$ at $\fb_w\in\cB$ is $(\fn^-\cap \fn_{\fb_w}^-)^\perp\cap\fn_{\fb_w}\simeq \fn^-\cap \fn_{\fb_w}= \bigoplus\limits_{\alpha\in w(\Phi^+)\cap \Phi^-}\fg_\alpha$, with respect to the Killing form. Similarly, the conormal at any $\fb_x\in S_w^-$ can be identified with $\fn^-\cap \fn_{\fb_x}\subset  \Ad_{N^-}(\fn^-\cap \fn_{\fb_w})$.

\begin{lemma}[Lemma 5.17 \cite{Nad06}]\label{(co)stalk}
For any sheaf $\cF\in Sh_{\cS}(\cB)$, we have 
$$i_{p_w}^*s_w^*\cF\simeq \mathbb{D}(Hom(\cF, s_{w !}^-\bC_{S_w^-}[\dim S_w^-]))[-\dim S_w^-],$$
$$i_{p_w}^*s_w^!\cF\simeq \mathbb{D}(Hom(\cF, s_{w*}^-\bC_{S_w^-}[\dim S_w^-]))[-\dim S_w^-],$$
for all $w\in W$. 
\end{lemma}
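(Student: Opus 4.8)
The plan is to deduce the two isomorphisms from adjunction together with Verdier duality, exploiting the fact that the strata are contractible cells and that the fixed point $p_w$ sits inside $S_w$ as a point from which $S_w$ deformation-retracts equivariantly. Concretely, I would first reduce the statement about $i_{p_w}^*s_w^*\cF$ to the corresponding statement about $s_w^*\cF$ itself: since $S_w$ is a cell (so in particular connected, simply connected, and contractible) and $\cF$ is constructible with respect to $\cS$, the pullback $s_w^*\cF$ is locally constant, hence constant, so $i_{p_w}^*s_w^*\cF\simeq \Gamma(S_w, s_w^*\cF)$, the global sections (equivalently, $\pi_*s_w^*\cF$ for $\pi\colon S_w\to \mathrm{pt}$). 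Thus it suffices to identify $\Gamma(S_w, s_w^*\cF)$ with $\mathbb{D}\,Hom(\cF, s^-_{w!}\bC_{S^-_w}[\dim S^-_w])[-\dim S^-_w]$, and similarly for $s_w^!$.

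The key algebraic input is the chain of adjunctions on $\cB$. Write $\pi_w\colon S_w\to\mathrm{pt}$. Then $\Gamma(S_w, s_w^*\cF) = \pi_{w*}s_w^*\cF$, and by the $(s_w^*, s_{w*})$ and $(\pi_w^*,\pi_{w*})$ adjunctions this computes $Hom(\bC_{S_w}, s_w^*\cF) = Hom(s_{w!}\bC_{S_w}, \cF)$ — but this is the wrong variance and the wrong stratum. The actual bridge is the transversality of $\cS$ and $\cS^-$: the closure $\overline{S^-_w}$ meets $S_w$ transversely in the single point $p_w$, and meets the stratification $\cS$ so that $s^-_{w!}\bC_{S^-_w}$ (a costandard sheaf for the opposite stratification) is "microlocally supported" transversally to $\Lambda_\cS$ precisely over $p_w$. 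Concretely I would use the base change identity $i_{p_w}^*s_w^*\cF \simeq i_{p_w}^! (s_w^-)_! (s^-_w)^* \cF$-type manipulations, or more directly apply $\mathbb{D}$ to both sides of the claimed formula and instead prove $\mathbb{D} i_{p_w}^* s_w^* \cF \simeq i_{p_w}^! s_w^! \mathbb{D}\cF$ paired against the standard object $s^-_{w*}\bC_{S^-_w}$. The cleanest route: by Verdier duality, $\mathbb{D}\big(i_{p_w}^*s_w^*\cF\big) \simeq i_{p_w}^! s_w^! \mathbb{D}\cF$, so the first formula is equivalent to
\begin{equation*}
i_{p_w}^! s_w^! \mathbb{D}\cF \simeq Hom\big(\cF, s^-_{w!}\bC_{S^-_w}[\dim S^-_w]\big)[-\dim S^-_w] = Hom\big(s^-_{w!}\bC_{S^-_w}, \mathbb{D}\cF\big)[-2\dim S^-_w + \cdots],
\end{equation*}
and now $Hom(s^-_{w!}\bC_{S^-_w}[\dim S^-_w], \mathbb{D}\cF) \simeq Hom(\bC_{S^-_w}[\dim S^-_w], (s^-_w)^!\mathbb{D}\cF) \simeq \Gamma(S^-_w, (s^-_w)^!\mathbb{D}\cF)[-\dim S^-_w]$. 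Since $S^-_w$ is also a cell and meets $S_w$ only at $p_w$ transversally, a local Morse-theoretic / normal-slice argument identifies the costalk of $s_w^!\mathbb{D}\cF$ along $S_w$ at $p_w$ with the section $\Gamma(S^-_w,(s^-_w)^!\mathbb{D}\cF)$ up to the expected cohomological shift by $\dim S^-_w = \mathrm{codim}\, S_w$. The second formula follows by the same argument with the roles of $*$ and $!$ (equivalently standard and costandard) exchanged, or by applying the first formula to $\mathbb{D}\cF$ and using $\mathbb{D}^2 = \mathrm{id}$ together with $\mathbb{D}(s^-_{w!}\bC_{S^-_w}[\dim S^-_w]) = s^-_{w*}\bC_{S^-_w}[\dim S^-_w]$.

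The main obstacle I expect is the geometric step: making precise that the transverse intersection $S_w \pitchfork \overline{S^-_w} = \{p_w\}$ lets one identify the (co)stalk of $s_w^{*}\cF$ at $p_w$ (a priori an invariant of a neighborhood of $p_w$ inside $S_w$) with a pairing against the (co)standard sheaf on the transverse slice $S^-_w$. This is exactly the content that in \cite{Nad06} is packaged via Morse theory and the local structure of the stratification near $p_w$ — one chooses local coordinates in which $S_w$ and $S^-_w$ are complementary linear subspaces through $p_w$, writes $\cF$ near $p_w$ in terms of its restrictions to the strata, and checks the identity stratum by stratum using the standard triangles $i_!i^! \to \mathrm{id} \to j_*j^*$. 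The simplicity of $\cS$ (each frontier is a Cartier divisor) and the cell structure ensure these reductions terminate. Everything else — the adjunction juggling, the duality bookkeeping, the shifts — is formal once this local comparison is in hand.
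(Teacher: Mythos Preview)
The paper does not give its own proof of this lemma: it is simply quoted as Lemma~5.17 of \cite{Nad06}, so there is nothing to compare against at the level of argument. Your sketch therefore has to be judged on its own merits.

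Your overall strategy is the right one and matches the spirit of the result in \cite{Nad06}: use that $S_w$ and $S_w^-$ are contractible cells meeting transversely at $p_w$, reduce the (co)stalk on $S_w$ to a global (co)section computation, and then relate this via adjunction and Verdier duality to a $\mathrm{Hom}$ against the (co)standard object on the transverse slice $S_w^-$. You also correctly isolate the one genuinely geometric step, namely that $S_w^-$ serves as a normal slice to $S_w$ at $p_w$, so that restriction to $S_w^-$ (together with the $\mathbb{C}^*$-contraction of $S_w^-$ to $p_w$) recovers the (co)stalk along $S_w$. That is exactly the content packaged by Morse theory in \cite{Nad06}.

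There is, however, a slip in your duality bookkeeping. In the displayed line where you pass from $Hom(\cF, s^-_{w!}\bC_{S^-_w}[\dim S^-_w])$ to a $\mathrm{Hom}$ out of a costandard object, applying $\mathbb{D}$ turns $s^-_{w!}$ into $s^-_{w*}$, not back into $s^-_{w!}$; consequently the $(s^-_{w!},(s^-_w)^!)$ adjunction you invoke in the next line does not apply as written. The clean route is: use $Hom(\cF, s^-_{w!}\bC[d]) \simeq Hom(s^-_{w*}\bC[d],\mathbb{D}\cF)$ (from $\mathbb{D}(s^-_{w!}\bC[d])=s^-_{w*}\bC[d]$), then the $((s^-_w)^*, s^-_{w*})$ adjunction, and finally the contraction lemma on $S^-_w$ to identify $\Gamma(S^-_w,(s^-_w)^*\mathbb{D}\cF)$ with the stalk $i_{p_w}^*\mathbb{D}\cF = \mathbb{D}\,i_{p_w}^!\cF$. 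Matching shifts then gives the statement; the second formula follows, as you say, by applying the first to $\mathbb{D}\cF$.
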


Let $\cN$ and $\cN^{\mathrm{reg}}$ respectively be the nilpotent cone and the orbit of regular nilpotent elements in $\fg$. 
\begin{lemma}\label{no intersect}
For any $w\prec w_0$, $\Gamma_{dF_{w_0}}\cap \overline{\mathfrak{U}}_X(p_w)=\emptyset$.
\end{lemma}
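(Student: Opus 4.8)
The plan is to understand the descending manifold $\overline{\mathfrak{U}}_X(p_w)$ concretely inside $T^*\cB$ and show that the Lagrangian graph $\Gamma_{dF_{w_0}}$, which sits over the open cell $S_{w_0}$ and points in a generic covector direction at $p_{w_0}$, cannot meet it when $w\prec w_0$. Recall that $\Lambda_X = \Lambda_\cS = \bigcup_w T^*_{S_w}\cB$ is the conormal variety, and $\Lambda_X^{\opp} = \Lambda_{\cS^-}$ is the conormal variety of the opposite Schubert stratification; the descending manifold $\mathfrak{U}_X(p_w)$ of the $\bC^*_X$-fixed point is the component of $\Lambda_{\cS^-}$ lying over $S_w^-$, namely $T^*_{S_w^-}\cB$ (restricted suitably), with closure $\overline{\mathfrak{U}}_X(p_w) = \bigcup_{v \leq w} T^*_{S_v^-}\cB$ using the opposite Bruhat order. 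So the statement to prove is that $\Gamma_{dF_{w_0}}$ is disjoint from $\bigcup_{v\le w} T^*_{S_v^-}\cB$ whenever $w \neq w_0$.

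First I would reduce to a pointwise check on the base: a point of $\Gamma_{dF_{w_0}}$ lies over $S_{w_0}$, the open dense Schubert cell, so any intersection point must lie in $S_{w_0} \cap S_v^-$ for some $v \le w \prec w_0$. Since $\cS$ and $\cS^-$ are transverse and $S_v^-$ has codimension $\ell(w_0) - \ell(v) \geq \ell(w_0) - \ell(w) > 0$ when $v \le w \prec w_0$, such an intersection is nonempty but positive-codimensional in $S_{w_0}$; this alone does not give disjointness, so the real content is about the covector. At a point $\fb_x \in S_{w_0}\cap S_v^-$ the fiber of $\overline{\mathfrak{U}}_X(p_w)$ is the conormal space to $S_v^-$, which by the computation recalled just before the lemma is identified with $\fn^-\cap \fn_{\fb_x} = \bigoplus_{\alpha\in x(\Phi^+)\cap\Phi^-}\fg_\alpha$ for the appropriate $x$ (a proper, nonzero subspace of $\fn_{\fb_x}$ when $v \prec w_0$). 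On the other hand the covector of $\Gamma_{dF_{w_0}}$ at such a point is $(dF_{w_0})_{\fb_x}$, and because $F_{w_0}$ is a generic linear function in the coordinates $z_\alpha$ dual to the negative simple roots $w_0(\Delta)$, its differential is a \emph{regular} element of the cotangent fiber $\fn_{\fb_x}^* \cong \fn^-\cap\fn_{\fb_x}$ in the sense that all of its ``simple root components'' are nonzero.

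The key step, then, is to show that a regular covector over $S_{w_0}$ never lands in the conormal space $\fn^-\cap\fn_{\fb_x}$ of a smaller opposite Schubert cell $S_v^-$ with $v\prec w_0$. Concretely: $dF_{w_0}$ is $N$-translated from $p_{w_0}$, so over any point of $S_{w_0}$ it stays in the orbit of a regular element, hence it is never contained in a proper coroot-subspace of the nilradical; but the conormal to $S_v^-$ is exactly such a proper subspace (it omits the coordinate directions $\fg_\alpha$ for $\alpha \in x(\Phi^-)\cap\Phi^-$, which is nonempty precisely when $v \neq w_0$). I would make this precise by pulling everything back to the basepoint via the $N$-action and using the explicit root-space descriptions above, or equivalently by invoking the characterization that the nonvanishing of all simple-root components is an $N$-invariant, open condition cutting out exactly the covectors not lying in any conormal $T^*_{S_v^-}\cB$ with $v\prec w_0$. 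The genericity hypothesis $\prod_\alpha c_\alpha \neq 0$ is exactly what guarantees this nonvanishing.

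The main obstacle I anticipate is bookkeeping the identifications: matching the coordinates $z_\alpha$ near $p_{w_0}$ (dual to $w_0(\Delta)$) with the root-space decomposition $\fn^-\cap\fn_{\fb_x} = \bigoplus_{\alpha\in x(\Phi^+)\cap\Phi^-}\fg_\alpha$ at a moving point $\fb_x\in S_v^-$, and checking that ``generic'' in the coordinate sense transports to ``regular'' in the Lie-theoretic sense along the $N$-orbit. One must be careful that the covector direction is constant only up to the $N$-action, not literally constant, so the argument should be phrased $N$-equivariantly from the start. Once the correct dictionary is set up, the disjointness is immediate: a regular covector cannot lie in the proper subspace that the smaller-cell conormal is. An alternative, perhaps cleaner, route is to argue dimensions: $\Gamma_{dF_{w_0}}$ and $\overline{\mathfrak{U}}_X(p_w)$ are both closed subsets of $\overline{T^*\cB}$ whose intersection, if nonempty, would have to be a union of broken $\bC^*_{X-Z}$-flow lines emanating from $p_w$ (as in the proof of Theorem \ref{tilting}); but one shows no such flow line reaches the generic covector at $p_{w_0}$ because that covector is, by genericity, a Morse point only for the \emph{open} stratum.
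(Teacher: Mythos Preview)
Your core idea is the same as the paper's: the covectors along $\Gamma_{dF_{w_0}}$ are regular nilpotent, whereas the conormals to smaller opposite cells $S_v^-$ (for $v\prec w_0$) consist entirely of non-regular elements, so the two sets cannot meet. The difference is in packaging. The paper dispenses with the pointwise bookkeeping you anticipate by pushing everything forward via the Springer resolution $\mu_G: T^*\cB\to\cN$: it observes that $\Gamma_{dF_{w_0}}=\mu_N^{-1}(\bar e)=N\cdot\mu_G^{-1}(e)$ for a regular nilpotent $e$, so $\mu_G(\Gamma_{dF_{w_0}})\subset\cN^{\mathrm{reg}}$, while $\mu_G(\overline{\mathfrak U}_X(p_w))=\overline{\Ad_{N^-}(\fn^-\cap\fn_{\fb_w})}$ misses $\cN^{\mathrm{reg}}$ because $\fn^-\cap\fn_{\fb_w}$ has a zero simple-root component for $w\prec w_0$. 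In other words, the ``$N$-equivariant dictionary'' you say you would need to set up is exactly the moment map $\mu_G$, and ``regular'' becomes a manifestly $G$-invariant condition once you land in $\fg$.

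One small correction to your write-up: the cotangent fibre at $\fb_x$ is $\fn_{\fb_x}$, not $\fn^-\cap\fn_{\fb_x}$; the latter is the conormal to $S_v^-$ inside that fibre, which is the proper subspace you want to avoid. Your alternative route via broken flow lines is not how the paper proceeds and would amount to re-proving Theorem~\ref{tilting} rather than verifying its hypothesis.
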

\begin{proof}
Consider the moment maps $\mu_{G}: T^*\cB\rightarrow \cN$ (the Springer resolution) and $\mu_{N}: T^*\cB\rightarrow \fn^*\simeq \fn^-$ of the Hamiltonian $G$-action and $N$-action on $T^*\cB$ respectively, then $\Gamma_{dF_{w_0}}$ is nothing but $\mu_N^{-1}(\bar{e})=N\cdot \mu_G^{-1}(e)$, where $e$ is the image $\mu_{G}(dF_{w_0}|_{p_{w_0}})$ whose projection $\bar{e}$ is the character of $\fn$ corresponding to the linear function $\sum\limits_{\alpha\in w_0(\Delta)}c_\alpha z_\alpha$. It follows from our assumption that $e$ lies in  $\cN^{\mathrm{reg}}$. We only need to show that for any $w\prec w_0$,  $\fn^-\cap \fn_{\fb_w}$ are singular values of $\mu_{G}$, or in other words,   $(\fn^-\cap \fn_{\fb_w})\cap\cN^{\mathrm{reg}}=\emptyset$, because then $\mu_{G}(\overline{\mathfrak{U}}_X(p_w))=\overline{\Ad_{N^-}(\fn^-\cap \fn_{\fb_w})}$ will not intersect $\cN^{\mathrm{reg}}$. 

Note that $\cN^{\mathrm{reg}}\cap \fn^-=\Ad_{B^-}e$. Therefore, if we decompose any element in $\cN^{\mathrm{reg}}\cap \fn^-$ with respect to the weight decomposition, it will have a nonzero component in each negative simple root space. However, the elements in $\fn^-\cap \fn_{\fb_w}$ cannot satisfy this property for $w\prec w_0$, hence we are done. 
\end{proof}

\begin{prop}
The Lagrangian graph $\Gamma_{dF_{w_0}}$ is a Morse brane in $F_{\Lambda_X}(T^*\cB)$.
\end{prop}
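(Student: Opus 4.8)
The plan is to deduce the Proposition directly from Theorem~\ref{tilting} applied to the fixed locus $E_\alpha$ containing the $\bC_X^*$-fixed point $p_{w_0}$, together with Lemma~\ref{no intersect}. First I would identify the relevant data: since $k_0=1$ here, the $\bC^*_{X-Z}$-action on $T^*\cB$ has $p_{w_0}$ as a fixed point lying in some fixed locus $E_{w_0}$, and the brane $L_{w_0,(dF_{w_0})_{p_{w_0}}}=\Gamma_{dF_{w_0}}$ of Lemma~\ref{ascending Lag} is exactly the ascending manifold $\mathfrak{S}_{X-Z}\bigl((dF_{w_0})_{p_{w_0}}\bigr)$. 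To invoke Theorem~\ref{tilting} I must verify its hypothesis, namely that the point $x=(dF_{w_0})_{p_{w_0}}$ lies in $E_{w_0}-\bigcup_{w\prec w_0}\overline{\mathfrak{U}}_X(p_w)$; equivalently, by the remark that $\overline{\mathfrak{U}}_X(p_w)=\overline{\mathfrak{U}}_{X-Z}(E_w)$, that $x$ is not in the closure of any lower descending manifold. This is precisely what Lemma~\ref{no intersect} provides, since $\Gamma_{dF_{w_0}}\cap\overline{\mathfrak{U}}_X(p_w)=\emptyset$ for all $w\prec w_0$ forces the particular point $x$ to avoid all those closures.

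Next I would check the remaining structural input. Since $w_0$ is the longest element, there is no $w\succ w_0$, so $p_{w_0}$ is a maximal fixed point; consequently the hypothesis $E_{w_0}\not\subset\bigcup_{w\prec w_0}\overline{\mathfrak{U}}_X(p_w)$ holds (it contains $x$, which lies outside that union by the previous paragraph). Theorem~\ref{tilting} then yields immediately that $L_{w_0,x}=\Gamma_{dF_{w_0}}$ is a Morse brane in $F_{\Lambda_X}(T^*\cB)$, with $\overline{\Gamma_{dF_{w_0}}}\cap\overline{\Lambda_X^{\opp}}=\{x\}$. I would also spell out that $x\in(\Lambda_X^{\opp})^{sm}$: the point $(dF_{w_0})_{p_{w_0}}$ is the conormal covector at $p_{w_0}$ to the open opposite stratum $S_{w_0}^-=\cB$ whose conormal is just the zero section there, or more precisely that $x$ lies over the smooth open stratum of $\cS^-$, hence in the smooth part of $\Lambda_X^{\opp}=\Lambda_{\cS^-}$; combined with genericity of $F_{w_0}$ (the condition $\prod c_\alpha\neq 0$) this also gives transversality of the intersection.

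The only genuine content beyond quoting Theorem~\ref{tilting} is the translation between the two descriptions of $\Lambda_X^{\opp}$ and the verification that the unique intersection point sits in its smooth locus and meets $\Gamma_{dF_{w_0}}$ transversally. Here I would use the moment-map picture from the proof of Lemma~\ref{no intersect}: $\Gamma_{dF_{w_0}}=\mu_N^{-1}(\bar e)=N\cdot\mu_G^{-1}(e)$ with $e\in\cN^{\mathrm{reg}}$, so any point of $\Gamma_{dF_{w_0}}\cap\Lambda_{\cS^-}$ must have image under $\mu_G$ in $\cN^{\mathrm{reg}}\cap\overline{\Ad_{N^-}(\fn^-\cap\fn_{\fb_x})}$, and by the weight argument in that lemma this is nonempty only for the open stratum $S_{w_0}^-$, where $\fn^-\cap\fn_{\fb_{w_0}}=0$; thus the intersection point is $(dF_{w_0})_{p_{w_0}}$ itself, lying over the smooth open stratum. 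Transversality at $x$ then follows because $\Gamma_{dF_{w_0}}$ is the graph of an exact one-form whose differential at $p_{w_0}$ is non-degenerate in the directions complementary to $T_{S_{w_0}^-}^*\cB|_{p_{w_0}}$, by genericity of $F_{w_0}$.

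I expect the main obstacle to be purely expository rather than mathematical: making precise the identification of $\mathfrak{S}_{X-Z}\bigl((dF_{w_0})_{p_{w_0}}\bigr)$ with the Lagrangian graph $\Gamma_{dF_{w_0}}$ already asserted in the text, and confirming that the point $(dF_{w_0})_{p_{w_0}}$ is the one selected by $x\in E_{w_0}-\bigcup_{w\prec w_0}\overline{\mathfrak{U}}_X(p_w)$ in Theorem~\ref{tilting}. Once those bookkeeping identifications are in place, the Proposition is an immediate corollary, and indeed the text signals exactly this by remarking that the proof of Theorem~\ref{big tilting thm} (and of Theorem~\ref{L_x tilting}) will follow. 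So the write-up would be short: cite Lemma~\ref{no intersect} to check the hypothesis, apply Theorem~\ref{tilting}, and record that the unique intersection point lies in $(\Lambda_X^{\opp})^{sm}$ with transverse intersection by the moment-map/genericity argument.
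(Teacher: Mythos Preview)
Your core approach is correct and matches the paper exactly: the paper's proof is the one-line ``This directly follows from Theorem~\ref{tilting} and Lemma~\ref{no intersect},'' followed by an alternative argument via the moment map description $\Gamma_{dF_{w_0}}=\mu_N^{-1}(\bar e)$, which shows directly that the Lagrangian is closed and that $\lim_{t\to 0}a_Z(t)\cdot\Gamma_{dF_{w_0}}\subset\mu_N^{-1}(0)=\Lambda_X$.

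That said, several of your supplementary remarks are mislabelled. The opposite Schubert cell $S_{w_0}^-$ is the single point $\{p_{w_0}\}$, not $\cB$: descending manifolds for $\check\rho$ contract as $w$ grows in Bruhat order, so $S_{w_0}^-$ is the closed stratum and its conormal is the full cotangent fiber $T^*_{p_{w_0}}\cB$. Correspondingly $\fb_{w_0}=\fb^-$, hence $\fn^-\cap\fn_{\fb_{w_0}}=\fn^-$, not $0$. The point $x=(dF_{w_0})_{p_{w_0}}$ sits in this cotangent fiber, and the genericity condition $\prod_\alpha c_\alpha\neq 0$ is precisely what keeps it off the closures of the other conormals $T^*_{S_w^-}\cB$ for $w\prec w_0$, hence in $(\Lambda_X^{\opp})^{sm}$. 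These are bookkeeping slips and do not affect the argument you actually need, since Theorem~\ref{tilting} already packages the Morse brane conclusion (transversality and smooth-locus intersection included) once its hypothesis is verified via Lemma~\ref{no intersect}.
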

\begin{proof}
This directly follows from Theorem \ref{tilting} and Lemma \ref{no intersect}. Alternatively, one can directly use the fact that $\Gamma_{dF_{w_0}}=\mu_N^{-1}(\bar{e})$ to deduce that the Lagrangian is closed. Also from this one easily sees that $\lim\limits_{t\rightarrow 0}a_Z(t)\cdot\Gamma_{dF_{w_0}}\subset\mu_N^{-1}(0)=\Lambda_X$, so $\Gamma_{dF_{w_0}}^\infty\subset \Lambda_X^\infty$. 
\end{proof}

\begin{thm}
The Lagrangian graph $\Gamma_{dF_{w_0}}[\dim_\bC\cB]$ corresponds to the big (indecomposable) tilting perverse sheaf.
\end{thm}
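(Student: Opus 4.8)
The plan is to verify the defining (co)stalk vanishing conditions for a tilting sheaf directly, using Lemma \ref{(co)stalk} to convert the (co)stalk computation on each opposite Schubert stratum $S_w^-$ into a microlocal computation at a single point, which in turn is computed by the Morse brane $\Gamma_{dF_{w_0}}$. More precisely, write $\cF$ for the sheaf corresponding to $\Gamma_{dF_{w_0}}[\dim_\bC\cB]$ under the Nadler--Zaslow correspondence. By \cite{Jin}, since $\Gamma_{dF_{w_0}}$ is a holomorphic Lagrangian brane, $\cF$ is perverse; it remains to check that it is tilting with respect to the opposite Schubert stratification, i.e. that $H^\bullet(s_x^{-*}\cF)$ and $H^\bullet(s_x^{-!}\cF)$ are concentrated in degree $-\dim S_x^-$ for every $x\in W$ (here I must be careful about which stratification governs tiltingness; since the perverse sheaf is $N$-equivariant it is constructible for $\cS$, but tiltingness is most naturally checked against $\cS^-$, which is transverse to $\cS$ and simple). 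By the stratified nature of the situation, the (co)stalk of $\cF$ along $S_x^-$ is constant on the stratum, so it suffices to compute at the fixed point $p_x$, or equivalently at the base point $\fb_x$.

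The key steps, in order, are as follows. First, I would set up the microlocal interpretation: the Morse brane $\Gamma_{dF_{w_0}}$ intersects $\Lambda_X^{opp} = \Lambda_{\cS^-}$ transversely at the unique point $(\fb_{w_0}, (dF_{w_0})_{p_{w_0}})$ lying over $S_{w_0}^-=\{p_{w_0}\}$, and more generally the conormal ray it determines is the conormal to $S_x^-$ at $\fb_x$ only for $x=w_0$; the point is that the Hom-pairings in Lemma \ref{(co)stalk} with the standard and costandard branes on $S_x^-$ are exactly the Floer cohomologies $HF^\bullet(\Gamma_{dF_{w_0}}, L_{S_x^-})$ and $HF^\bullet(\Gamma_{dF_{w_0}}, L_{S_x^-}^{\vee})$, and the brane intersection $\Gamma_{dF_{w_0}} \cap T^*_{S_x^-}\cB$ is, by the description $\Gamma_{dF_{w_0}}=\mu_N^{-1}(\bar e)$ together with $\mu_G^{-1}(\cN^{\mathrm{reg}})\cap (\fn^- \cap \fn_{\fb_x}) = \emptyset$ for $x\prec w_0$ from the proof of Lemma \ref{no intersect}, either empty (when $x\neq w_0$, since $\mu_G$ restricted to the conormal fiber misses $\cN^{\mathrm{reg}}$) or a single transverse point (when $x=w_0$). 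Second, I would translate this into the statement that the microlocal stalk $M_{\fb_x,\xi_x}(\cF)$ at the conormal covector to $S_x^-$ vanishes for $x \neq w_0$ and is one-dimensional in degree $0$ for $x=w_0$; because $\cF$ is perverse and the stratification $\cS^-$ is simple, this forces $s_x^{-*}\cF$ and $s_x^{-!}\cF$ to be concentrated in degree $-\dim S_x^-$, which is exactly the tilting condition. Third, I would read off indecomposability: since the microlocal stalk is supported (and one-dimensional) only at the open stratum $S_{w_0}^-$, the perverse sheaf $\cF$ has $\overline{S_{w_0}^-} = \cB$ as support with multiplicity one along the open stratum, so $\cF$ must be the unique indecomposable tilting perverse sheaf supported on all of $\cB$, i.e. the big tilting.

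For the degree bookkeeping I would track the shifts carefully: the standard brane $L_{S_x^-}$ corresponds to $s_{x*}^-\bC_{S_x^-}$, so $s_{x*}^-\bC_{S_x^-}[\dim S_x^-]$ is the costandard-normalized object appearing in Lemma \ref{(co)stalk}, and the overall shift $[\dim_\bC\cB]$ on $\Gamma_{dF_{w_0}}$ is precisely what places the nonzero microlocal stalk in cohomological degree $0$ and the (co)stalk at $p_{w_0}$ in degree $-\dim S_{w_0}^- = -\dim_\bC \cB$; I would check this normalization against the standard brane computation (which recovers the standard sheaf, whose $*$-stalk on its own stratum sits in degree $-\dim$). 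The main obstacle I anticipate is not any single hard estimate but rather making the passage from ``the brane $\Gamma_{dF_{w_0}}$ is a Morse brane'' to ``its microlocal stalks along every $S_x^-$ are as claimed'' fully rigorous: one must know that the Floer-theoretic computation of $\mathrm{Hom}(\cF, s_{x*}^-\bC_{S_x^-}[\dim S_x^-])$ reduces to the geometric intersection $\Gamma_{dF_{w_0}} \cap T^*_{S_x^-}\cB$ with the correct signs and gradings (so that there are no cancellations), and that for $x\prec w_0$ this intersection is genuinely empty along the whole stratum and not merely generically — this is exactly where Lemma \ref{no intersect} and the structure $\Gamma_{dF_{w_0}} = \mu_N^{-1}(\bar e) = N\cdot\mu_G^{-1}(e)$ with $e\in\cN^{\mathrm{reg}}$ do the work, and I would lean on them together with the $N$-equivariance to propagate the vanishing off the base point. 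Once that is in place, Theorem \ref{big tilting thm} follows, and the identical argument, using Theorem \ref{tilting} in place of the explicit moment-map description, gives Theorem \ref{L_x tilting}.
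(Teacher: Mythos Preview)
There is a genuine gap in your plan, and it stems from a misidentification of what the Floer computation yields. You claim that $HF^\bullet(\Gamma_{dF_{w_0}}, L_{S_x^-})$ and $HF^\bullet(\Gamma_{dF_{w_0}}, L_{S_x^-}^{\vee})$ are controlled by the geometric intersection $\Gamma_{dF_{w_0}}\cap T^*_{S_x^-}\cB$, and that this intersection is empty for $x\neq w_0$, hence the Homs vanish. But the (co)standard brane $L_{S_x^-}$ is the graph $\Gamma_{d\log m_x}$ over $S_x^-$, not the conormal bundle; it lives over the open stratum $S_x^-$, which meets the open Schubert cell $S_{w_0}$, and $\Gamma_{dF_{w_0}}$ is a graph over all of $S_{w_0}$. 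So the relevant Floer intersection is generally nonempty. Concretely, already for $\cB=\mathbb{P}^1$ and $x=e$, the big tilting has $i_{p_e}^*\cF\simeq\bC^2$, so by Lemma~\ref{(co)stalk} the Hom to $s_{e!}^-\bC_{S_e^-}[1]$ is two-dimensional, not zero. The tilting condition requires these Homs to be \emph{concentrated in a single degree}, not to vanish.

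Relatedly, you check tiltingness against $\cS^-$, but $\cF\in Sh_{\cS}(\cB)$ and the tilting condition is for $\cS$: one needs $s_w^*\cF$ and $s_w^!\cF$ in degree $-\dim S_w$. Lemma~\ref{(co)stalk} converts this into computing $Hom(\cF, s_{w!}^-\bC_{S_w^-}[\dim S_w^-])$ and $Hom(\cF, s_{w*}^-\bC_{S_w^-}[\dim S_w^-])$, and the task is to show each sits in degree $0$. The paper does this not by a pointwise intersection count for each $w$, but by proving once and for all that $\Gamma_{dF_{w_0}}$ corepresents the microlocal stalk functor $M_{p_{w_0},dF_{w_0}}$ on $Sh_{\cS^-}(\cB)$: one refines $\cS^-$ to a triangulation $\tilde{\cS}^-$ and exhibits generators of $F_{\Lambda_{\tilde{\cS}^-}}(T^*\cB)$ consisting of $T^*_{p_{w_0}}\cB$ together with (co)standard branes that miss $\Gamma_{dF_{w_0}}$ after dilation. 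Granting this, $Hom(\cF_\Gamma,\cG)\simeq M_{p_{w_0},dF_{w_0}}(\cG)$ for every $\cG\in Sh_{\cS^-}(\cB)$; applying it to the perverse sheaves $s_{w!}^-\bC[\dim S_w^-]$ and $s_{w*}^-\bC[\dim S_w^-]$ (perverse because $\cS^-$ is simple) lands the Homs in degree $0$, which via Lemma~\ref{(co)stalk} gives the tilting vanishing. Indecomposability is then read off from the multiplicity formula $\mathrm{mult}_{T_w}(T_{p,F})=\dim M_{p,F}(\mathrm{IC}_w^{\opp})$ of \cite{Nad06}. Your proposal is missing this ``$\Gamma_{dF_{w_0}}$ is a Morse kernel'' step, which is exactly what converts the single transverse intersection with $\Lambda_X^{\opp}$ into the degree statement you need.
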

\begin{proof}
We first show that the sheaf corresponding to $\Gamma_{dF_{w_0}}$ plays the role of a Morse kernel on $Sh_{\cS^-}(\cB)$, i.e. calculating vanishing cycles.

To show this, we will use the work \cite{GPS1} and \cite{GPS2} on partially wrapped Fukaya categories. By the proof of Theorem \ref{tilting}, the Lagrangian $\Gamma_{dF_{w_0}}$ can also be viewed as a well defined object in the wrapped Fukaya category $\cW(T^*\cB, \Lambda_{\cS^-}^\infty)$, with stops in the infinity of the conic Lagrangian $\Lambda_{\cS^-}$. We claim that for any object $L\in F_{\Lambda_{\cS^-}}(T^*\cB)$, represented by an isotopy of exact Lagrangians $L_t, t\in [0,1]$, where $L_1=L$ and $L_t, t\in [0,1)$ are all cylindrical, i.e. conic near $\infty$, and $L_{t_1}$ is a negative wrapping of $L_{t_2}$ if $1>t_1>t_2$,  the natural morphism
\begin{align}\label{eq: F to cW functor}
Hom_{F(T^*\cB)}(\Gamma_{dF_{w_0}}, L)\cong \lim\limits_{t\rightarrow 1^-}Hom_{F(T^*\cB)}(\Gamma_{dF_{w_0}}, L_t)\longrightarrow Hom_{\cW(T^*\cB, \Lambda_{S^-}^\infty)}(\Gamma_{dF_{w_0}}, L_0)
\end{align}
is an isomorphism. Since the the morphism (\ref{eq: F to cW functor}) is functorial in $L$, we just need to show the isomorphism for the set of generators $\{L_{S_w^-}, w\in W\}$ consisting of standard branes over the strata. 

Since each $S_{w^-}$ is isomorphic to an affine space, it is easy to see that one can choose an isotopy $L_t, t\in [0,1]$ as above such that $L_1=L_{S_w^-}$, and $L_{t}, t\in [0,1)$ is cofinal in the negative wrapping category $(L_0\rightarrow -)^-$, using \cite[Remark 3.31]{GPS1}. This directly implies that (\ref{eq: F to cW functor}) is an isomorphism for $L=L_{S_w^-}$.

With the claim established, we can use the wrapping exact triangle \cite[Theorem 1.9]{GPS2} and the fact that $\Gamma_{dF_{w_0}}$ intersects the Lagrangian skeleton $\Lambda_{\cS^-}$ transversely at exactly one point on the smooth Lagrangian component $T_{p_{w_0}}^*\cB-\bigcup\limits_{w\prec w_0}\overline{\Lambda}_{S_{w}^-}$, to conclude that $\Gamma_{dF_{w_0}}$ is isomorphic to a linking disc of the Lagrangian component.

 By \cite{GPS3}, we have an equivalence $\cW(T^*\cB, \Lambda_{\cS^-}^\infty)\simeq \Sh^w_{\cS^-}(\cB)$, where $\Sh^w_{\cS^-}(\cB)$ (called \emph{wrapped} microlocal sheaves in \cite{Nad16}) is the full subcategory of compact objects in the large dg-category $\Sh^\diamond_{\cS^-}(\cB)$ of all sheaves constructible with respect to $\cS^-$. Since all strata in $\cS^-$ are contractible, the co-standard sheaves of the strata give the compact generators of $\Sh^\diamond_{\cS^-}(\cB)$, so $\Sh^w_{\cS^-}(\cB)\simeq \Sh_{\cS^-}(\cB)$.  It has been shown in \emph{loc. cit.} that under the equivalence of categories, the linking discs are corresponding to Morse kernels (up to degree shifts). This implies that $\Gamma_{dF_{w_0}}$ represents a Morse kernel on $Sh_{\cS^-}(\cB)$. 
 
 Alternatively, one can use the construction in \cite[Section 4.1, 4.2]{Jin} to represent a Morse kernel by a (partially holomorphic) Lagrangian that intersects $\Lambda_{\cS^-}$ transversely at $(p_{w_0}, e)$ in $\cW(T^*\cB, \Lambda_{\cS^-}^\infty)$. This is isomorphic to a linking disc for the same reason as above, and this gives a proof that $\Gamma_{dF_{w_0}}$ represents a Morse kernel on $Sh_{\cS^-}(\cB)$, without appealing to \cite{GPS3}.

Now by Lemma \ref{(co)stalk}, the stalk and costalk of the sheaf corresponding to $\Gamma_{dF_{w_0}}[\dim_\bC\cB]$ on $S_w$ are concentrated in the right degrees for being a tilting sheaf. It is easy to see this sheaf is exactly the tilting sheaf $T_{p,F}$ for some $p,F$ introduced in \cite{Nad06}.

Lastly, by the multiplicity formula $\mathrm{mult}_{T_w}(T_{p,F})=\dim M_{p,F}(\mathrm{IC}^{opp}_w)$ in \cite{Nad06}, where $T_w$ is the minimal tilting sheaf on $S_w$, 
we see that $\Gamma_{dF_{w_0}}[\dim_\bC\cB]$ corresponds to the big indecomposable tilting sheaf. 

\end{proof}

\end{document}